\numberwithin{equation}{section}
\theoremstyle{plain}% default
\newtheorem{theorem}{Theorem}[section]
\newtheorem{lemma}[theorem]{Lemma}
\newtheorem{proposition}[theorem]{Proposition}
\newtheorem{corollary}[theorem]{Corollary}
\newtheorem{problem}{Problem}
\newtheorem*{problem*}{Problem}
\newtheorem{method}{Method}
\theoremstyle{definition}
\newtheorem{definition}{Definition}[section]
\theoremstyle{remark}
\newtheorem{remark}{Remark}[section]
\crefname{hypothesis}{Hypothesis}{Hypotheses}
\crefname{problem}{Problem}{Problems}
\crefname{method}{Method}{Methods}
\crefname{section}{Section}{Sections}
\newcommand{\abs}[1]{\left\vert#1\right\vert}
\newcommand{\spr}[1]{\left\langle\,#1\,\right\rangle}
\newcommand{\kl}[1]{\left(#1\right)}
\definecolor{aog}{rgb}{0.0, 0.5, 0.0}
\newcommand{\R}{\mathbb{R}} 
\newcommand{\C}{\mathbb{C}}
\newcommand{\mfunc}[1]{\text{#1} \,}
\newcommand{\sgn}{\mfunc{sgn}}
\newcommand{\eps}{\varepsilon}
\title{An Inverse Problems Approach to Pulse Wave Analysis in the Human Brain}
\author{
Lukas Weissinger\footnote{ Corresponding author.} 
\footnote{Johann Radon Institute Linz, Altenbergerstra{\ss}e 69, A-4040 Linz, Austria, (lukas.weissinger@ricam.oeaw.ac.at, simon.hubmer@ricam.oeaw.ac.at, ronny.ramlau@ricam.oeaw.ac.at)},
Simon Hubmer\footnotemark[2]~\footnote{Johannes Kepler University Linz, Institute of Industrial Mathematics, Altenbergerstra{\ss}e 69, A-4040 Linz, Austria, (simon.hubmer@jku.at, ronny.ramlau@jku.at)},
\\
Ronny Ramlau \footnotemark[2]~\footnotemark[3] ,
Henning U.\ Voss\footnote{Cornell University - College of Human Ecology, Cornell MRI Facility, Martha Van Rensselaer Hall, Ithaca NY 14870, United States of America, (hv28@cornell.edu)}
}
\begin{document}

\maketitle

% REQUIRED
\begin{abstract}
Cardiac pulsations in the human brain have received recent interest due to their possible role in the pathogenesis of neurodegenerative diseases. Further interest stems from their possible application as an endogenous signal source that can be utilized for brain imaging in general. The (pulse-)wave describing the blood flow velocity along an intracranial artery consists of a forward (anterograde) and a backward (retrograde, reflected) part, but measurements of this wave usually consist of a superposition of these components. In this paper, we provide a mathematical framework for the inverse problem of estimating the pulse wave velocity, as well as the forward and backward component of the pulse wave separately from MRI measurements on intracranial arteries. After a mathematical analysis of this problem, we consider possible reconstruction approaches, and derive an alternate direction approach for its solution. The resulting methods provide estimates for anterograde/retrograde wave forms and the pulse wave velocity under specified assumptions on a cerebrovascular model system. Numerical experiments on simulated and experimental data demonstrate the applicability and preliminary in vivo feasibility of the proposed methods.
\end{abstract}

\smallskip
\noindent \textbf{Keywords.} Pulse Wave Splitting, Medical Imaging, Magnetic Resonance Imaging, Inverse and Ill-Posed Problems, Tikhonov Regularization, Alternate Direction Method

\smallskip
\noindent \textbf{MSC-Codes.} 47J06, 65J22, 65J20, 47A52

\section{Introduction}\label{sec:intro}

The vascular network is a complex biophysical system optimized for an efficient perfusion of the brain with blood. Its arterial tree consists of blood vessels whose diameters range over three orders of magnitude and, in the healthy brain, are perfectly tuned to absorb the cardiac pressure pulsations to minimize blood flow resistance and backflow, and thus maximize perfusion \cite{RN3496, RN3663, RN3480}. In the brain, cerebrovascular pulsatility \cite{Zou_Park_Kelly_Egnor_Wagshul_Madsen_2008} has become an important research topic due to its possible role in ageing-related brain health. Notably, after the age of 60, increases in blood pressure are mainly attributable to a rapid increase of pulse pressure (the difference between systolic and diastolic pressure), driven by the increase in systolic pressure \cite{RN6068}. As we get older, and in some conditions such as chronic hypertension, the physical properties of the vascular network, including its viscoelasticity, change \cite{RN3540, RN4338, RN5790, RN4339, RN3507}. Ageing-related changes of the central and peripheral cardiovascular system affect blood-flow, pulsatility, and pathologies of the brain, too \cite{RN3476, RN6080, RN6070}. The resulting disturbance of its fine-tuning reduces the absorption of blood pressure pulsations, increasing resistance, and possibly causing damage including hemorrhagic stroke and microbleeds. In particular, small-scale changes not visible in conventional in-vivo imaging \cite{RN4508, RN3545, RN741, RN742, RN4511} have garnered attention as possible causes of dementia \cite{RN5339, RN3476, RN5338, RN5337, RN5341}. In addition, the pressure gradient caused by arterial pulsatility is thought to be a driving force in the brain’s paravascular waste clearance system \cite{RN5618, RN5869, RN5815, RN5619}, which affects brain health by transporting away metabolic waste products. 

 Arterial blood flow consists of a steady and a pulsatile flow component \cite{RN5453, RN3496, RN3480}. The former is affected by vascular resistance, the latter by arterial stiffness. As the pulse wave travels through the brain, it changes both in shape and amplitude \cite{RN3496}. It is well-known that this pulse wave is reflected at vessel branches and endings and therefore both shape and amplitude of the wave measured in a single point on an artery need to be considered as resulting from a superposition of anterograde and retrograde waves. The anterograde wave is a product of the pumping action of the heart and distortions experienced up to the point of measurement. The retrograde wave is a product of the distortions experienced by the downstream vasculature including wave reflection. 
 The induced (pulse) wave travels within the blood vessels at a velocity called the \emph{pulse wave velocity} (PWV). In the main cerebral arteries, the PWV is typically higher in magnitude (up to about 12 m/s \cite{RN5528}) than the blood flow velocity, (about 30 to 100 cm/s) which varies widely with age, cardiac phase, blood pressure and type of artery, among other factors \cite{RN5460,RN6220}. 
Due to its high magnitude, the PWV is currently not directly measurable with MRI techniques. However,
4D Flow MRI allows to measure blood flow velocity over a specific field of view over several time steps in the cardiac cycle, resulting in data equivalent to a time-dependent pulse wave in each three-dimensional voxel of the acquired images, see \cref{fig:principle}.
    \begin{figure}[ht!]
        \centering
        \includegraphics[height=.23\textheight]{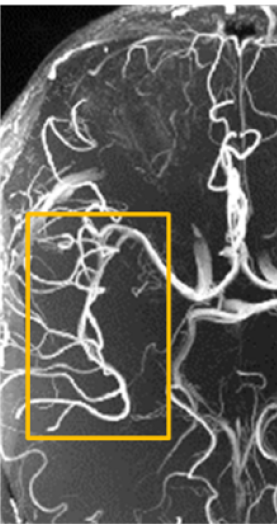}
        \includegraphics[height=.23\textheight]{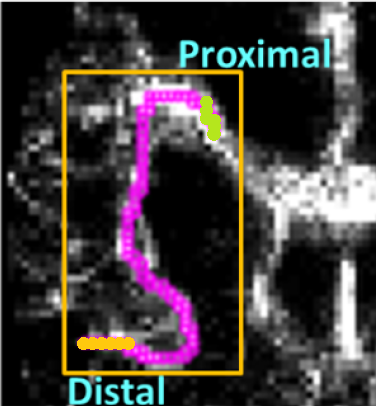}
        \includegraphics[height=.23\textheight]{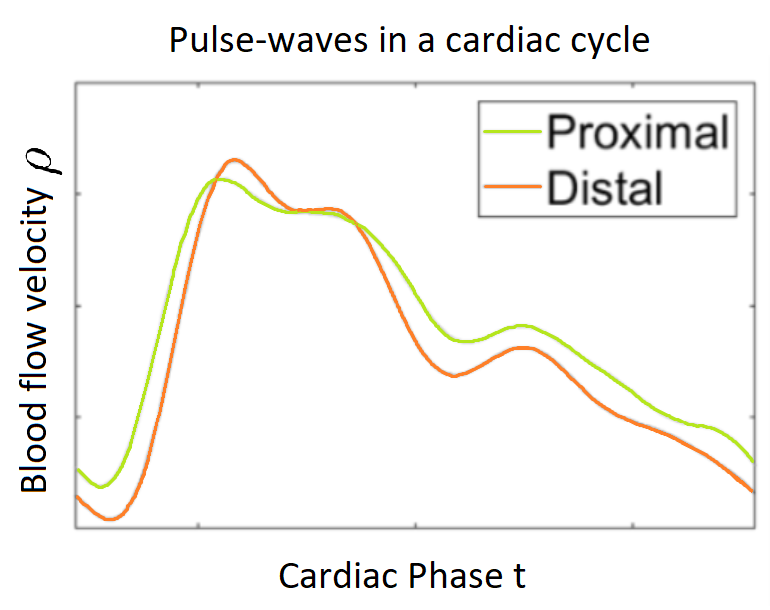}
        \caption{High resolution angiogram (left), tracked artery in 4D Flow MRI images (middle), and averaged pulse waves over color-coded voxels (right).}
        \label{fig:principle}
    \end{figure}

Recently, there has been lots of progress in non-invasive imaging of cerebral pulsatility \cite{RN5360, RN6069, RN3466, Hubmer_Neubauer_Ramlau_Voss_2018, Hubmer_Neubauer_Ramlau_Voss_2020, RN5342, RN3467, RN3468, RN3602, RN5298, RN5334} and pulse waves \cite{RN5802, RN3481, Voss_2018, RN5795, RN5614} with MRI. In particular, characterizing the cerebrovascular network by means of its pulsatile properties has come within reach. The fundamental parameter used to describe vascular networks is the impedance. It has specific analogs in the electrical equivalent circuit model \cite{Hademenos_Massoud_1997,Korürek_Yildiz_Yüksel_2010,Lim_Chan_Dokos_Ng_Latif_Vandenberghe_Karunanithi_Lovell_2013,Murray_1964,Rupnik_Runovc_Sket_Kordas_2002}, which are summarized in \cref{table:circuit}.
\begin{table}[ht!]
\caption{The electrical equivalent circuit model}
    \begin{tabular}{lllll}
    Hemodynamic parameter & Electrical equivalence &  &  &  \\\hline
    V, volume                                     & Q, charge                                      &  &  \\
    p, pressure                                   & U, voltage                                     &  &  \\
    R, vascular   resistance                      & R, resistance                                  &  &  \\
    L, inertance                                  & L, inductance                                  &  &  \\
    C, compliance; C   = dV/dp                    & C, capacitance; C   = dQ/dU                    &  &  \\
    f, volumetric bl.   flow rate; f = dV/dt      & I, current; I =   dQ/dt                        &  &  \\
    Poiseuille’s law;   f = $\Delta$p/R          & Ohm's law; I = U/R                     &  & \\
    Z, impedance; Z$(\omega) = p(\omega)/f(\omega)\in \C$ & Z, impedance; Z$(\omega) = U(\omega)/I(\omega)\in \C$ & &
    \end{tabular}
    
    \label{table:circuit}
\end{table}
Measuring the impedance of the vascular tree would determine vascular resistance as well as compliance, linking the PWV with blood flow \cite{Rabben_Stergiopulos_Hellevik_Smisetz_slordahl_2004, Villiemoz_Stergiopulos_Meuli_2002}. This equivalence allows one to apply system identification procedures used in electrical engineering to the vascular network, placing a vast array of computational and analytical methods at our disposal to identify changes in arterial stiffness. 

From \cref{table:circuit} one can see that in order to measure impedance directly, one would have to measure blood pressure and flow at the same location and time. However, noninvasive direct measurement of impedance in the brain is presently not possible. As a workaround, rather than measuring pressure and flow in one location, a signal such as phase contrast or blood volume could be measured at multiple points along the artery \cite{Hubmer_Neubauer_Ramlau_Voss_2018, RN5342, RN3467, RN3468, RN3602,Voss_2018,RN5614, RN5298, RN5334,RN3663,Zou_Park_Kelly_Egnor_Wagshul_Madsen_2008}. This way, the PWV has recently been estimated under the approximation that reflected waves can be neglected \cite{bjornfot2021,Rivera2020}.

In this paper, we consider the problem of splitting pulse waves in the human brain along intracranial arteries into their forward and backward components. We consider the situation that the pulse wave is observable with MRI, for example as a volume- or flow-wave \cite{RN5870}. This restriction precludes prior approaches that are based on measuring the impedance of a blood vessel \cite{RN7124, RN7125, RN5794, RN3509, RN6926, RN5632, RN5481, RN7138}; the calculation of impedance (Table~\ref{table:circuit}) requires the observation of blood pressure at the same time and location as blood flow, for which no MRI pulse sequence has been developed yet. The price to pay for replacing blood pressure is that we will have to measure the pulse wave at least at two separate locations in the brain, which necessitates the estimation of pulse wave velocity between these locations, too \cite{RN5782}. The PWV has been estimated recently under the approximation that reflected waves can be neglected \cite{bjornfot2021,Rivera2020}.
Thus, our approach includes an alternative way for PWV estimation inside the brain. 
The transition from two system variables to one variable at two locations is akin to the time-delayed embedding of a system, or the conversion of a two-dimensional system of differential equations to a one-dimensional differential equation of higher order. A first approach to pulse wave splitting using this rationale has been considered in the master's thesis \cite{Augl_Thesis_2022}, and other one-variable approaches are provided by Refs. \cite{RN7134, RN6919}. 

In our paper, we consider the MRI pulse wave data as a function $p(x,t):\Omega \times \R \to \R$ defined on a domain $\Omega \subset \R^3$ representing the given field of view. We assume that $p(x,t)$ is a periodic function in time $t$ with period $T$, corresponding to one cardiac cycle $[0,T] \subset \R$.
For spatial points $x_1,x_2\in\Omega$ in a blood vessel segment of our interest we will subsequently use the notation $p_1(t):=p(x_1,t)$ and $p_2(t):=p(x_2,t),$ respectively.
If we consider $p_1$ and $p_2$, proximal and distal to the heart, along a non-branching vessel segment of constant diameter, each of these waves is a superposition of forward ($p_{1f},p_{2f}$) and backward ($p_{1b},p_{2b}$) components, i.e.,    
\begin{equation}\label{eq_two_waves}
        p_1(t) = p_{1f}(t) + p_{1b}(t) \,,
        \qquad \text{and}\qquad
        p_2(t) = p_{2f}(t) + p_{2b}(t) \,.
    \end{equation}
The component waves ($p_{1f},p_{2f}$) and ($p_{1b},p_{2b}$) travel spatially through the vessel with the PWV $u$, which under the above assumptions remains constant along the considered vessel segment. Let $L$ denote the length of the vessel segment between the two measurement points $x_1,x_2$. Then both the forward and backward component waves only differ by a time shift (delay) $\tau = L/u,$ which is induced by the PWV $u$. This results in the physical relations
    \begin{equation}\label{eq_delay2}
        p_{2f}(t) = p_{1f}(t-\tau)\,,
        \qquad \text{and} \qquad
        p_{1b}(t) = p_{2b}(t-\tau) \,.
    \end{equation}
Note that functions fulfilling the delay relations \eqref{eq_delay2} are also connected to solutions of one-dimensional advection equations; cf.~\cite{Hubmer_Neubauer_Ramlau_Voss_2018}. Inserting these relations into \eqref{eq_two_waves} we thus obtain the system of equations
    \begin{equation*}
        p_1(t) = p_{1f}(t) + p_{2b}(t-\tau) \,,
        \qquad \text{and}\qquad
        p_2(t) = p_{1f}(t-\tau) + p_{2b}(t) \,,
    \end{equation*}
which after application of the Fourier transform $\mathcal{F},$ where $\hat{p}(\omega):=(\mathcal{F}p)(\omega)$, yields
    \begin{equation}\label{eq_problem}
        \hat{p}_{1}(\omega) = \hat{p}_{1f}(\omega) + \hat{p}_{2b}(\omega)e^{-i\omega L/u} \,,
        \qquad \text{and} \qquad
        \hat{p}_{2}(\omega) = \hat{p}_{1f}(\omega)e^{-i\omega L/u} + \hat{p}_{2b}(\omega)\,.
    \end{equation}
Hence, if the PWV $u$ is known, the component waves $\hat{p}_{1f},\hat{p}_{2b}$ can be found directly via
    \begin{equation}\label{eq_direct}
        \hat{p}_{1f}(\omega) = \frac{\hat{p}_1(\omega) - \hat{p}_2(\omega) e^{-i \omega L/u}}{1-e^{-2 i \omega L/u}}\,,
        \qquad\text{and}\qquad 
        \hat{p}_{2b}(\omega) = \frac{\hat{p}_2(\omega) - \hat{p}_1(\omega) e^{-i \omega L/u}}{1-e^{-2 i \omega L/u}}\,.
    \end{equation}
However, the denominator $1 - e^{-2i\omega L/u}$ in the above formulas is not bounded away from zero for all $\omega$, leading to problems with well-posedness and stability with respect to data noise. A simple remedy for this is to replace the denominator by a filtered version $\lambda_\alpha(1-e^{-2i\omega L/u})$, where $\lambda_\alpha:\C\to\C$ is an approximation of the identity, but bounded away from zero for every \emph{regularization parameter} $\alpha>0$. Possible choices for $\lambda_\alpha$ are e.g., 
    \begin{equation}\label{eq_directreg}
        \lambda_1(x) = x + \alpha\,,
        \quad
        \lambda_2(x) = \sgn (\mathrm{Re}\,x) \max(|\mathrm{Re}\,x|,\alpha)+i\,\sgn(\mathrm{Im}\,x)\max(|\mathrm{Im}\,x|,\alpha)\,,
    \end{equation}
which are reminiscent of Tikhonov-type  and hard thresholding filters, respectively \cite{Engl_Hanke_Neubauer_1996}. However, this straightforward approach has a number of drawbacks:
    \begin{itemize}
        \item First of all, the above filters do not essentially differentiate between high-frequency and low-frequency components $\omega$ in the denominator. This results in a loss of smoothness of the reconstructions and thus numerical instabilities, cf.~\cref{sec:numerics}.
        \item As we will observe numerically in \cref{sec:numerics}, it is beneficial to measure more than two pulse waves $p_1$ and $p_2$ for obtaining unique reconstructions of the PWV $u$. While the above approach can be generalized to this case, the resulting system of equations becomes over-determined, leading to further mathematical challenges.
        \item Finally, and perhaps most importantly, the above approach requires knowledge of the PWV $u$, which is exactly the parameter one wants to estimate in practice.
    \end{itemize}

These drawbacks are the motivation for considering different approaches to the pulse wave splitting and PWV estimation problem. In particular, we reframe it as a parameter estimation problem in operator form commonly encountered in Inverse Problems. Following a theoretical analysis of this reformulated problem, we are able to apply the tools of regularization theory \cite{Engl_Hanke_Neubauer_1996}, and to design stable reconstruction algorithms for obtaining both the component waves and an estimate for the PWV $u$.

The outline of this paper is as follows: In \cref{sec:math_model}, we derive and mathematically analyse an operator formulation of the pulse wave problem, for which we develop reconstruction algorithms in \cref{sec:reconstruction}. These are tested in a number of numerical experiments in \cref{sec:numerics}, which is followed by a discussion and conclusion in \cref{sec:conclusion}.

% % % % % % % % % % % % % % % % % %
% Section - Mathematical Modeling %
% % % % % % % % % % % % % % % % % %
\section{Mathematical Model}\label{sec:math_model}

In this section, we aim to overcome the challenges discussed in the introduction for solving the pulse wave problem \eqref{eq_problem}. To do so, we first assume that, instead of only two waves $p_1$ and $p_2$, we have access to multiple pulse waves $p_1,\ldots,p_N$ obtained at $N \geq 2$ different measurement points along the considered vascular segment. As before, we assume that each of these waves is given as the sum of a forward and a backward (reflected) wave, i.e.,  
    \begin{equation}\label{eq_summation}
        p_k(t) = p_{kf}(t) + p_{kb}(t)\,,
        \qquad
        \forall \, 1\leq k\leq N \,.
    \end{equation}
Furthermore, we assume that the considered vascular segment is non-branching, and that the PWV $u$ is constant there. Then, as before, the forward and reflected waves perform a steady motion, and thus are delayed versions of each other. This is expressed by the delay equations
    \begin{equation}\label{eq_delay}
        p_{kf}(t) = p_{1f}(t-\tau_k) \,,
        \qquad \text{and} \qquad 
        p_{kb}(t) = p_{Nb}(t-(\tau_N-\tau_k))\,,
        \qquad
        \forall \, 1\leq k\leq N \,,
    \end{equation}
where the delays $\tau_k=L_k/u$ are defined in terms of the lengths $L_k$ of the vessel between the first and $k$-th measurement point along the vessel segment. Together, \eqref{eq_summation} and \eqref{eq_delay} are the constitutive equations of pulse wave splitting and PWV estimation, and are illustrated in \cref{fig:delays}. Combining theses equations, we ultimately obtain the following system of equations:
    \begin{equation}\label{fundamental_time}
        p_k(t) = p_{1f}(t-\tau_k) + p_{Nb}(t-(\tau_N-\tau_k))\,, \qquad \forall \,k=1,\ldots,N \,.
    \end{equation}
    \begin{figure}[ht!]
        \centering
        \includegraphics[width=\textwidth,trim={50 50 200 0},clip]{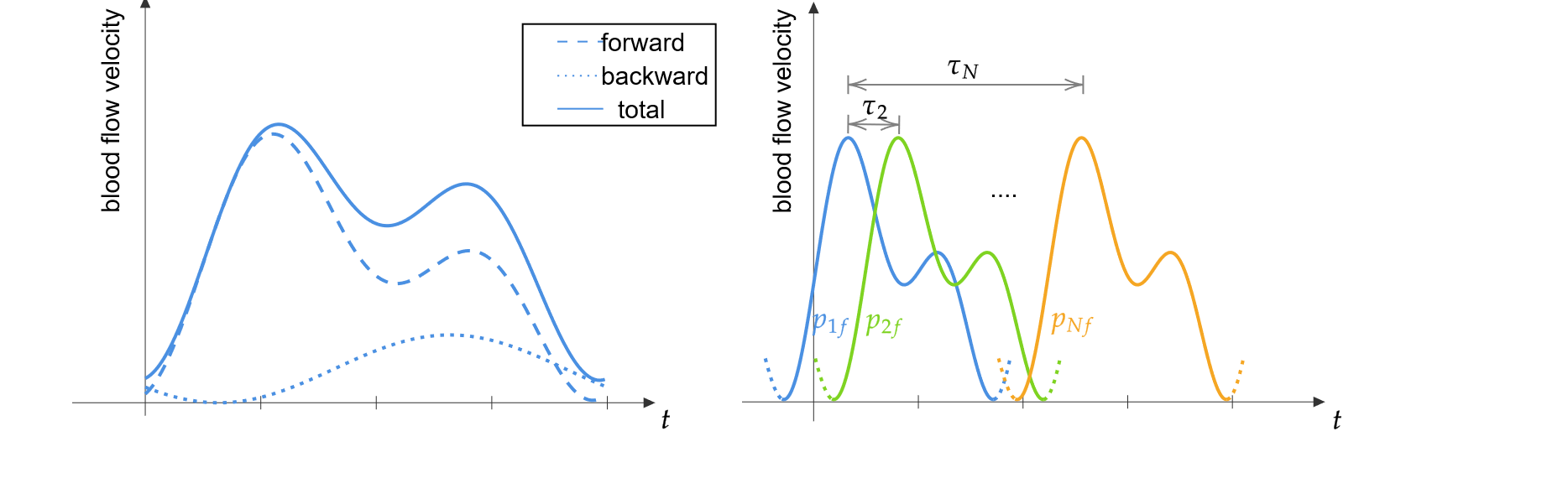}
        \caption{Illustration of summation (left) and delay (right) equations \eqref{eq_summation} and \eqref{eq_delay}.}
        \label{fig:delays}
    \end{figure}

Note that for given measurements $p_1,\ldots,p_N$, solving the system \eqref{fundamental_time} for $p_{1f}$, $p_{Nb}$, and the PWV $u$ is sufficient to determine a complete solution for the problem of pulse wave splitting and PWV estimation, since the forward and reflected waves in other measurement points can then be directly calculated via the delay equations \eqref{eq_delay}. In the following analysis, we require the PWV $u$ to be bounded away from zero, such that induced time-shifts in \eqref{eq_delay} are well defined. This assumption is not restricting the practical relevance of the model, since a PWV $u=0$ would imply no pulse. If we additionally assume a certain smoothness of the pulse waves, i.e., that the waves are elements of some Sobolev space $H^s(\R)$, then the system of equations \eqref{fundamental_time} leads to 

\begin{problem}[Pulse wave splitting and PWV estimation in time domain]\label{prob1}
Let $N\geq2$, $\varepsilon>0$, $r\geq s\geq 0,$ $p_k\in H^s(\R)$ and, for all $k=1,\ldots,N$, let $L_k\in\R^+$ be given and define $\tau_k=L_k/u$. Then the inverse pulse wave splitting and PWV estimation problem in time domain is defined as the reconstruction of $p_{1f},p_{Nb}\in H^r(\R)$ and $u\geq \varepsilon$ via \eqref{fundamental_time} from the data $p_1,\ldots,p_N$.
\end{problem}

In addition to \cref{prob1}, we consider the pulse wave splitting problem, i.e., the splitting of pulse waves into forward and backward parts assuming a known PWV $u$.

\begin{problem}[Pulse wave splitting in time domain]\label{prob2}
Let $N\geq 2$, $u > 0,$ $r\geq s\geq 0,$ and, for all $k=1,\ldots,N,$ let $p_k\in H^s(\R)$ and $L_k\in\R^+$ be given and define $\tau_k=L_k/u$. Then the pulse wave splitting problem in time domain is defined as the reconstruction of $p_{1f},p_{Nb}\in H^r(\R)$ via \eqref{fundamental_time} from the data $p_1,\ldots,p_N$.
\end{problem}

As a subproblem of \cref{prob1}, \cref{prob2} is a linear problem which is both practically interesting in itself \cite{Augl_Thesis_2022}, and will be of help in solving the full nonlinear \cref{prob1}.

% Subsection - Mathematical problem formulation and analysis
\subsection{Problem Formulation as Operator Equation and Analysis}

In our main pulse wave equation system \eqref{fundamental_time}, the time shifts $\tau_k$ appear as function arguments. These shifts are easier to handle in the Fourier domain, where they turn into multiplicative factors. More precisely, applying the Fourier transform to \eqref{fundamental_time} and using the Fourier-Shift Theorem yields
    \begin{equation}\label{fundamental_frequency}
        \hat{p}_k(\omega) = \hat{p}_{1f}(\omega)e^{-i\omega L_k/u} + \hat{p}_{Nb}(\omega)e^{-i\omega (L_N-L_k)/u} \,.
    \end{equation}
 This transformation has a number of advantages: First, the unknown PWV $u$ now appears in a multiplicative factor instead of as a function argument. Furthermore, it is easier to handle function periodicity in the Fourier-domain, and finally, smoothness conditions can be implemented easily in this setting. To simplify the notation below, we define $e_k(\omega,u) := e^{-i\omega L_k/u}$ and $\Tilde{e}_k(\omega,u) := e^{-i\omega (L_N-L_k)/u}.$ For any fixed $\varepsilon>0$ and $\mathcal{U}:=[\varepsilon,\infty)$ we also define the function spaces
    \begin{equation}\label{innerproducts}
        \mathcal{X}_s := (L^2_{s}(\R))^2\times \mathcal{U}, \qquad
        \text{and }\qquad\mathcal{Y}_s := (L^2_{s}(\R))^N,
    \end{equation} 
    where $L^2_s(\R)$ is defined via the norm $\|f\|_{L^2_s(\R)}:=\|(1+|\bullet|^2)^{s/2} f(\bullet)\|_{L^2(\R)}.$ Note that $L^2_r(\R)\subset L^2_s(\R)$ for $r>s,$ $L^2_0(\R)=L^2(\R)$, and $f\in H^s(\R)$ if and only if $\hat{f}\in L^2_s(\R).$ 
    The Hilbert spaces $\mathcal{X}_s$ and $\mathcal{Y}_s$ are equipped with the canonical inner products
    \begin{equation}\label{norms}
    \begin{aligned}
        \spr{(w_1,w_2,u),(x_1,x_2 \,, v)}_{\mathcal{X}_s} &:= \spr{w_1,x_1}_{L^2_{s}(\R)} + \spr{w_2,x_2}_{L^2_{s}(\R)} + uv \,, 
        \\
        \spr{(y_1,\ldots,y_N),(z_1,\ldots,z_N)}_{\mathcal{Y}_s} &:= \sum_{k=1}^N\spr{y_k,z_k}_{L_s^2(\R)} \,.
    \end{aligned}
    \end{equation}

For these weighted Lebesque spaces the following Lemma holds.
\begin{lemma}
For any $r\geq s \geq 0$ and $x \in L^2_r(\R)$ there holds
    \begin{equation}\label{whys1}
        \|x(\omega)\omega^{r-s}\|_{L_s^2(\R)}\leq\|x(\omega)\|_{L^2_r(\R)} \,.
    \end{equation}
\end{lemma}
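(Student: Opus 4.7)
The plan is to unfold both norms using the definition $\|f\|_{L^2_s(\R)}=\|(1+|\omega|^2)^{s/2}f(\omega)\|_{L^2(\R)}$ and reduce the inequality to a pointwise comparison of weights. Concretely, the claimed estimate is equivalent to
\begin{equation*}
\int_{\R}(1+\omega^2)^s\,|x(\omega)|^2\,\omega^{2(r-s)}\,d\omega\;\leq\;\int_{\R}(1+\omega^2)^r\,|x(\omega)|^2\,d\omega,
\end{equation*}
so it suffices to prove the pointwise inequality $(1+\omega^2)^s\,\omega^{2(r-s)}\leq (1+\omega^2)^r$ for every $\omega\in\R$ and then integrate against $|x(\omega)|^2$.

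For the pointwise step, I would divide both sides by the strictly positive factor $(1+\omega^2)^s$, reducing the claim to $\omega^{2(r-s)}\leq (1+\omega^2)^{r-s}$. Since $r-s\geq 0$ by assumption and $\omega^2\leq 1+\omega^2$ for all $\omega\in\R$, this follows from the monotonicity of $t\mapsto t^{r-s}$ on $[0,\infty)$; the edge case $r=s$ is trivial because both sides equal $1$.

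There is no real obstacle here; the only minor subtlety is ensuring that the weight $\omega^{2(r-s)}$ is well defined and integrable against $|x|^2(1+\omega^2)^s$, which is guaranteed by $x\in L^2_r(\R)$ together with the very pointwise bound just established (so finiteness of the right-hand side implies finiteness of the left-hand side). Integrating the pointwise inequality then yields \eqref{whys1} and completes the proof.
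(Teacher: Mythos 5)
Your proposal is correct and follows essentially the same route as the paper: both unfold the weighted norms and reduce the claim to the pointwise bound $(|\omega|^2)^{r-s}\leq(1+|\omega|^2)^{r-s}$, which holds since $r\geq s$. Nothing further is needed.
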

\begin{proof}
For any fixed $r\geq s \geq 0$ it holds that
    \begin{equation*}
    \begin{aligned}
        \|x(\omega)\omega^{r-s}\|^2_{L_s^2(\R)} &= \int_\R(1+|\omega|^2)^{s} \underbrace{(|\omega|^2)^{r-s}}_{\leq (1+|\omega|^2)^{r-s}}|x(\omega)|^2\,\mathrm{d}\omega\\&\leq
        \int_\R (1+|\omega|^2)^r|x(\omega)|^2 \,\mathrm{d}\omega
        =\|x(\omega)\|^2_{L^2_r(\R)} \,,
    \end{aligned} 
    \end{equation*}
    from which the assertion directly follows.
\end{proof}
Note that \cref{prob1} is equivalent to finding a solution $(\hat{p}_{1f},\hat{p}_{Nb},u)$ of \eqref{fundamental_frequency} given the Fourier-transformed data $(\hat{p}_{1},\ldots,\hat{p}_{N})$. In order to solve this inverse problem, we want to make use of tools from \emph{regularization theory}, and thus we rephrase the problem as an operator equation between suitable Hilbert spaces. To do so, we first need the following definition.
 
\begin{definition}\label{def:F}
Let $N\in\mathbb{N},$ $r\geq s\geq 0,$ fix $ (L_1,\ldots,L_N)\in(\R^+)^N$ and define the following operators:
%\begin{equation}
\begin{alignat}{3}\label{ABmatrixdef}
     V_N: &&\quad L_r^2(\R)&\to \mathcal{Y}_r: \qquad\qquad \qquad\qquad\;\, z(\omega) &&\mapsto(1,\ldots,1)^T z(\omega),  \nonumber \\
     A:&&\mathcal{U}&\to(\mathcal{Y}_r\to\mathcal{Y}_s):\qquad
     A (u): \,\boldsymbol{y}(\omega) &&\mapsto \mathrm{diag}\left(\left(e_k(\omega,u)\right)_{k=1}^N\right)\boldsymbol{y}(\omega), \\
    B:&&\mathcal{U}&\to (\mathcal{Y}_r\to\mathcal{Y}_s):\qquad B (u): \,\boldsymbol{y}(\omega) &&\mapsto\mathrm{diag}\left(\left(\Tilde{e}_k(\omega,u)\right)_{k=1}^N\right)\boldsymbol{y}(\omega)\, . \nonumber
\end{alignat}
%\end{equation} 
Then, for $\boldsymbol{x}=(x_1,x_2)\in (L^2_{r}(\R))^2$ the operator $F:\mathcal{X}_r\to \mathcal{Y}_s$ is defined as 
    \begin{equation}\label{eq_def:F}
        F(\boldsymbol{x},u):= A(u)V_Nx_1 + B(u)V_Nx_2\,.
    \end{equation}
\end{definition}

Using \cref{def:F}, we can now formulate the system of equations \eqref{fundamental_frequency} as
    \begin{equation}\label{Operator_eq}
        F(\hat{p}_{1f},\hat{p}_{Nb},u) = (\hat{p}_k)_{k=1}^N \,,
    \end{equation}
i.e., as a single operator equation, which leads to the following 

\begin{problem}[Operator form of pulse wave splitting and PWV estimation in frequency domain]\label{prob3} 
Let $N\geq 2,$ $r\geq s\geq 0$ and, for all $k=1,\ldots,N$, let $L_k\in\R^+$ be given. Furthermore, let $F$ be defined as in \eqref{eq_def:F} and assume that $(\hat{p}_k)_{k=1}^N\in \mathcal{Y}_s$ is given. Then, the pulse wave splitting and PWV estimation problem is defined as finding $(\hat{p}_{1f},\hat{p}_{Nb},u)\in \mathcal{X}_r$ such that the nonlinear operator equation \eqref{Operator_eq} holds.
\end{problem}

Concerning the well-definedness of $F$, we obtain the following
\begin{theorem}
The operator $F:\mathcal{X}_r\to \mathcal{Y}_s$ as in \eqref{eq_def:F} is well-defined and satisfies
    \begin{equation*}
        \|F(\boldsymbol{x},u)\|_{\mathcal{Y}_s}\leq \sqrt{2N} \|(x_1,x_2,u)\|_{\mathcal{X}_r} \,,
        \qquad
        \forall \, (\boldsymbol{x},u) = (x_1,x_2,u)\in \mathcal{X}_r \,.
    \end{equation*}
\end{theorem}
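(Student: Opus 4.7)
The plan is to exploit the fact that the multipliers $e_k(\omega,u)$ and $\tilde e_k(\omega,u)$ are unimodular, combined with the elementary embedding $L^2_r(\R)\hookrightarrow L^2_s(\R)$ for $r\geq s\geq 0$, which follows from $(1+|\omega|^2)^s\leq (1+|\omega|^2)^r$.

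First I would handle $A(u)V_N x_1$ in isolation. By definition,
\begin{equation*}
\|A(u)V_N x_1\|_{\mathcal{Y}_s}^2 = \sum_{k=1}^{N}\int_{\R}(1+|\omega|^2)^s\,|e_k(\omega,u)|^2\,|x_1(\omega)|^2\,\mathrm{d}\omega.
\end{equation*}
Since $|e_k(\omega,u)|=|e^{-i\omega L_k/u}|=1$ for every real $\omega$ (here the assumption $u\geq \varepsilon>0$ ensures the exponent is well defined), each integral equals $\|x_1\|_{L^2_s(\R)}^2$, giving $\|A(u)V_N x_1\|_{\mathcal{Y}_s}^2 = N\|x_1\|_{L^2_s(\R)}^2$. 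The embedding $L^2_r\hookrightarrow L^2_s$ then yields $\|A(u)V_N x_1\|_{\mathcal{Y}_s}\leq \sqrt{N}\,\|x_1\|_{L^2_r(\R)}$. An identical argument, using $|\tilde e_k(\omega,u)|=1$, produces $\|B(u)V_N x_2\|_{\mathcal{Y}_s}\leq \sqrt{N}\,\|x_2\|_{L^2_r(\R)}$. In particular, both operators map $L^2_r(\R)$ into $\mathcal{Y}_s$, so $F(\boldsymbol{x},u)\in\mathcal{Y}_s$ and $F$ is well-defined.

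Next I would combine the two bounds. The triangle inequality gives
\begin{equation*}
\|F(\boldsymbol{x},u)\|_{\mathcal{Y}_s} \leq \|A(u)V_N x_1\|_{\mathcal{Y}_s} + \|B(u)V_N x_2\|_{\mathcal{Y}_s} \leq \sqrt{N}\bigl(\|x_1\|_{L^2_r(\R)}+\|x_2\|_{L^2_r(\R)}\bigr),
\end{equation*}
and squaring and applying the elementary inequality $(a+b)^2\leq 2(a^2+b^2)$ yields $\|F(\boldsymbol{x},u)\|_{\mathcal{Y}_s}^2\leq 2N\bigl(\|x_1\|_{L^2_r(\R)}^2+\|x_2\|_{L^2_r(\R)}^2\bigr)$. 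Finally, since $u^2\geq 0$, the right-hand side is bounded by $2N\,\|(x_1,x_2,u)\|_{\mathcal{X}_r}^2$ by definition of the inner product in \eqref{norms}, and taking square roots gives the claimed constant $\sqrt{2N}$.

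I do not expect any genuine obstacle: the argument is a direct computation that relies only on unimodularity of the Fourier shift symbols, the monotonicity of the Sobolev-type weights in $s$, and the triangle plus Cauchy--Schwarz inequalities. The one point worth being careful about is that the dependence of $F$ on $u$ enters only through the phases $e_k,\tilde e_k$, so the bound is genuinely uniform in $u\in\mathcal{U}$, and the $u$-component of $\|(x_1,x_2,u)\|_{\mathcal{X}_r}$ is only used as the trivially nonnegative slack that absorbs the squared-norm inequality at the last step.
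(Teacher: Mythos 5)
Your proof is correct and follows essentially the same route as the paper's: unimodularity of the phase factors $e_k,\tilde e_k$, the embedding $\|\cdot\|_{L^2_s(\R)}\leq\|\cdot\|_{L^2_r(\R)}$ for $r\geq s$, the triangle inequality, and $(a+b)^2\leq 2(a^2+b^2)$, with the $u$-component of the $\mathcal{X}_r$-norm absorbed as nonnegative slack. The only cosmetic difference is that you bound the composite $A(u)V_N$ directly, whereas the paper separately estimates $\|A(u)\|_{\mathcal{Y}_r\to\mathcal{Y}_s}\leq 1$ and $\|V_Nx_i\|=\sqrt{N}\|x_i\|$.
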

\begin{proof}
First, let $u \in \mathcal{U}$ be arbitrary but fixed. Since $|e_k(\omega)|=1$, we can estimate
 \begin{equation}\label{ABnorm}
 \begin{aligned}
     \|A(u)\|_{\mathcal{Y}_r\to \mathcal{Y}_s}
     &\overset{\phantom{\eqref{norms}}}{=} \sup_{\boldsymbol{y}\in\mathcal{Y}_r}\frac{\|A(u)\boldsymbol{y}\|_{\mathcal{Y}_s}}{\|\boldsymbol{y}\|_{\mathcal{Y}_r}}
     =\sup_{\boldsymbol{y}\in\mathcal{Y}_r}\frac{\|(e_ky_k)_{k=1}^N\|_{\mathcal{Y}_s}}{\|\boldsymbol{y}\|_{\mathcal{Y}_r}}\\
     &\overset{\eqref{norms}}{=} \sup_{\boldsymbol{y}\in\mathcal{Y}_r}\frac{\left(\sum_{k=1}^N\|e_ky_k\|^2_{L^2_s(\R)}\right)^{1/2}}{\|\boldsymbol{y}\|_{\mathcal{Y}_r}}= \sup_{\boldsymbol{y}\in\mathcal{Y}_r}\frac{\|\boldsymbol{y}\|_{\mathcal{Y}_s}}{\|\boldsymbol{y}\|_{\mathcal{Y}_r}}\leq 1\, ,
 \end{aligned} 
\end{equation}
where we used $\|\boldsymbol{y}\|_{\mathcal{Y}_s}\leq\|\boldsymbol{y}\|_{\mathcal{Y}_r}$ for $r\geq s\geq0$ in the last step. Analogously, one can also show that $\|B(u)\|_{\mathcal{Y}_r\to\mathcal{Y}_s}\leq 1.$
Furthermore, note that for all $x_i\in L_s^2(\R)$ there holds
\begin{equation}\label{eq_Vbound}
    \|V_N x_i\|_{\mathcal{Y}_s}=\left(\sum_{k=1}^N\|x_i\|^2_{L_s^2(\R)}\right)^{1/2}=\sqrt{N}\|x_i\|_{L_s^2(\R)}\,.
\end{equation}
Now let $(\boldsymbol{x},u)\in \mathcal{X}_r.$ Due to the Cauchy-Schwarz inequality, for all $a,b\in\R$ there holds $a+b\leq (2(a^2+b^2))^{1/2}$. Hence, we can derive the estimate
\begin{equation*}
    \begin{aligned}
        \|F(\boldsymbol{x},u)\|_{\mathcal{Y}_s} &\overset{\phantom{\eqref{ABnorm},\eqref{eq_Vbound}}}{=} \|A(u)V_Nx_1+B(u)V_Nx_2\|_{\mathcal{Y}_s}
        \\&\overset{\phantom{\eqref{ABnorm},\eqref{eq_Vbound}}}{\leq}
        \|A(u)V_Nx_1\|_{\mathcal{Y}_s}+\|B(u)V_Nx_2\|_{\mathcal{Y}_s}
        \\&\overset{\phantom{\eqref{ABnorm},\eqref{eq_Vbound}}}{\leq}
        \|A(u)\|_{\mathcal{Y}_r\to\mathcal{Y}_s}\|V_Nx_1\|_{\mathcal{Y}_s}+\|B(u)\|_{\mathcal{Y}_r\to\mathcal{Y}_s}\|V_Nx_2\|_{\mathcal{Y}_s}
        \\&\overset{\eqref{ABnorm},\eqref{eq_Vbound}}{\leq} 
        \sqrt{N}\left(\|x_1\|_{L_s^2(\R)}+\|x_2\|_{L_s^2(\R)} \right)
        \\&\overset{\phantom{\eqref{ABnorm},\eqref{eq_Vbound}}}{\leq} \sqrt{2N}\left(\|x_1\|^2_{L_s^2(\R)}+\|x_2\|^2_{L_s^2(\R)} \right)^{1/2}
        \leq\sqrt{2N} \|(x_1,x_2,u)\|_{\mathcal{X}_r} \,,
    \end{aligned}
    \end{equation*} 
which yields the assertion.
\end{proof}

In \cref{def:F}, the operator $F$ is defined between $(L^2_r(\R))^2\times \mathcal{U}$ and $((L_s^2(\R)))^N$. Translated into the time-domain, this is equivalent to considering the unknown pulse waves $(p_{1f},p_{Nb})$ as elements of $(H^r(\R))^2$, and the data $(p_k)_{k=1}^N$ as elements of $(H^s(\R))^N$, respectively. Since the Fourier transform is an isomorphism on $L^2(\R)\supset H^s(\R)$, it thus follows that \cref{prob3} is equivalent to \cref{prob1}. Next, we show that for certain values of $s$ and $r$, the operator $F$ is continuously Fr\'echet differentiable. For this, we first need the following

\begin{lemma}\label{Lemma 2.7}
Let $r,s\geq0$ be such that $r\geq s+2.$
    Then the operators $A,B:\mathcal{U}\to (\mathcal{Y}_r\to\mathcal{Y}_s)$ given as in \eqref{ABmatrixdef} 
   are continuously Fr\'echet differentiable with
   \begin{equation}\label{A_derivative}
   \begin{aligned}
              A'(u)h: \boldsymbol{y}(\omega)&\mapsto\mathrm{diag}\left(\frac{i\omega L_1h}{u^2}e_1(\omega,u),\ldots,\frac{i\omega L_1h}{u^2}e_N(\omega,u)\right)\boldsymbol{y}(\omega)\,,\\
              B'(u)h: \boldsymbol{y}(\omega)&\mapsto\mathrm{diag}\left(\frac{i\omega L_1h}{u^2}\Tilde{e}_1(\omega,u),\ldots,\frac{i\omega L_1h}{u^2}\Tilde{e}_N(\omega,u)\right)\boldsymbol{y}(\omega)\,.\\         
   \end{aligned}
   \end{equation}
\end{lemma}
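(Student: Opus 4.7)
The plan is to prove differentiability of $A$ pointwise in $u$ via a Taylor expansion of the scalar functions $f_k(u):=e_k(\omega,u)=e^{-i\omega L_k/u}$, and then translate the resulting pointwise bound into an operator-norm bound from $\mathcal{Y}_r$ to $\mathcal{Y}_s$ via the weighted inequality \eqref{whys1}. The argument for $B$ is identical after replacing $L_k$ by $L_N-L_k$, so I would state it at the end and only prove the $A$-case in detail.

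First, I would check that for every fixed $u\in\mathcal{U}$, the candidate derivative $A'(u)$ given by \eqref{A_derivative} is indeed a bounded linear map $\mathcal{Y}_r\to\mathcal{Y}_s$. Since $|e_k(\omega,u)|=1$ and $u\geq\varepsilon$, each diagonal entry has modulus $\leq \varepsilon^{-2} L_k |\omega||h|$, and therefore, componentwise,
\begin{equation*}
    \|(A'(u)h)\boldsymbol{y}\|_{\mathcal{Y}_s}^2 \;\leq\; \frac{|h|^2}{\varepsilon^4}\sum_{k=1}^N L_k^2 \,\|\omega\, y_k\|_{L_s^2(\R)}^2 \;\leq\; C\,|h|^2 \,\|\boldsymbol{y}\|_{\mathcal{Y}_{s+1}}^2,
\end{equation*}
where in the last step I use \eqref{whys1} with exponent $r-s=1$. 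So $A'(u)\in \mathcal{L}(\R,\mathcal{L}(\mathcal{Y}_r,\mathcal{Y}_s))$ whenever $r\geq s+1$, which is covered by the hypothesis $r\geq s+2$.

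Next, I would establish the Fréchet property. For each $k$, the scalar function $f_k(t)=e^{-i\omega L_k/t}$ satisfies
\begin{equation*}
    f_k'(t)=\tfrac{i\omega L_k}{t^2}e^{-i\omega L_k/t},
    \qquad
    f_k''(t)=\Bigl(-\tfrac{2i\omega L_k}{t^3}-\tfrac{\omega^2 L_k^2}{t^4}\Bigr)e^{-i\omega L_k/t},
\end{equation*}
so for $t\in[\varepsilon,\infty)$ there is a constant $C_1$ with $|f_k''(t)|\leq C_1(1+\omega^2)$. The integral form of Taylor's theorem then gives the pointwise remainder bound $|f_k(u+h)-f_k(u)-f_k'(u)h|\leq \tfrac{1}{2}C_1(1+\omega^2)h^2$ for all $|h|$ small enough that $u+h\geq\varepsilon$. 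Consequently,
\begin{equation*}
    \bigl\|\bigl(A(u+h)-A(u)-A'(u)h\bigr)\boldsymbol{y}\bigr\|_{\mathcal{Y}_s}^2
    \;\leq\; C_2\,h^4 \sum_{k=1}^N \bigl\|(1+\omega^2)\,y_k\bigr\|_{L_s^2(\R)}^2
    \;\leq\; C_3\,h^4\,\|\boldsymbol{y}\|_{\mathcal{Y}_{s+2}}^2,
\end{equation*}
and by \eqref{whys1} with exponent $r-s=2$, the final norm is bounded by $\|\boldsymbol{y}\|_{\mathcal{Y}_r}^2$. Dividing by $|h|$ yields $\|A(u+h)-A(u)-A'(u)h\|_{\mathcal{Y}_r\to\mathcal{Y}_s}=\mathcal{O}(h^2)=o(h)$. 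This is the one place where the assumption $r\geq s+2$ is genuinely used, and it is the main technical point of the proof.

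Finally, I would verify that $u\mapsto A'(u)$ is continuous in the bilinear operator norm. Differentiating the factor $g_k(t,\omega):=t^{-2}e^{-i\omega L_k/t}$ in $t$ produces terms bounded by $C(1+|\omega|)$ uniformly on $[\varepsilon,\infty)$, so $|g_k(u+\eta,\omega)-g_k(u,\omega)|\leq C|\eta|(1+|\omega|)$. Multiplying by the $i\omega L_k$ factor and running the same weighted-$L^2$ estimate as above bounds $\|A'(u+\eta)-A'(u)\|$ by $C|\eta|$, which vanishes as $\eta\to 0$. The same computations with $\tilde e_k$ in place of $e_k$ and $L_N-L_k$ in place of $L_k$ prove the claim for $B$.
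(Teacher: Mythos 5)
Your proposal is correct and follows essentially the same route as the paper's proof in \cref{appendixA}: a second-order Taylor expansion of the scalar factors $e^{-i\omega L_k/u}$ in $u$, a uniform bound on the second derivative of the form $C(1+\omega^2)$ for $u\geq\varepsilon$, and the weighted estimate \eqref{whys1} to absorb the resulting $\omega$-powers into the gap $r\geq s+2$ between $\mathcal{Y}_r$ and $\mathcal{Y}_s$. Your explicit verification that $u\mapsto A'(u)$ is Lipschitz in the operator norm is a welcome addition, since the paper's proof establishes only the $\mathcal{O}(|h|^2)$ remainder and leaves the continuity of the derivative implicit.
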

\begin{proof}
    See \cref{appendixA}.
\end{proof}

\begin{theorem}\label{thm:frechet}
Let $r,s\geq0$ be such that $r\geq s+2.$ Then the operator $F:\mathcal{X}_r\to \mathcal{Y}_s$ as defined in \eqref{eq_def:F} is continuous and continuously Fr\'echet differentiable for each $(\boldsymbol{x},u)\in\mathcal{X}_r$, with
    \begin{equation}\label{eq_DefFrechet}
        F'(\boldsymbol{x},u)\boldsymbol{h}= A(u)V_Nh_1+B(u)V_Nh_2+A'(u)(V_nx_1)h_3+B'(u)(V_Nx_2)h_3\, ,
    \end{equation}
where $\boldsymbol{h}=(h_1,h_2,h_3)\in \mathcal{X}_r.$
\end{theorem}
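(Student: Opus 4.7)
The plan is to exploit the additive structure $F = F_1 + F_2$ with $F_1(\boldsymbol{x}, u) := A(u) V_N x_1$ and $F_2(\boldsymbol{x}, u) := B(u) V_N x_2$, and to handle each summand by a product-rule computation. Since $A$ and $B$ enter \cref{Lemma 2.7} completely symmetrically, one treatment covers both, so I would focus on $F_1$ and obtain the analogous result for $F_2$ by the same argument.

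For $F_1$ at a fixed point $(\boldsymbol{x}, u) \in \mathcal{X}_r$ and increment $\boldsymbol{h} = (h_1, h_2, h_3) \in \mathcal{X}_r$, a direct algebraic rearrangement yields the exact identity
\begin{equation*}
F_1(\boldsymbol{x}+\boldsymbol{h}) - F_1(\boldsymbol{x}) = A(u) V_N h_1 + \bigl[A(u+h_3)-A(u)\bigr] V_N x_1 + \bigl[A(u+h_3)-A(u)\bigr] V_N h_1,
\end{equation*}
where $F_1(\boldsymbol{x}+\boldsymbol{h})$ abbreviates $F_1(x_1+h_1, x_2+h_2, u+h_3)$. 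By \cref{Lemma 2.7}, which is valid under the standing assumption $r \geq s+2$, the map $A : \mathcal{U} \to (\mathcal{Y}_r \to \mathcal{Y}_s)$ is continuously Fréchet differentiable, so $A(u+h_3) - A(u) = A'(u)h_3 + R_A(u, h_3)$ with $\|R_A(u, h_3)\|_{\mathcal{Y}_r \to \mathcal{Y}_s} = o(|h_3|)$. Substituting this expansion, the middle summand becomes the desired contribution $A'(u) h_3 (V_N x_1)$ plus an $o(|h_3|)$ remainder acting on the fixed element $V_N x_1 \in \mathcal{Y}_r$. The rightmost, mixed term is controlled, using $\|A(u+h_3)-A(u)\|_{\mathcal{Y}_r \to \mathcal{Y}_s} = O(|h_3|)$ together with $\|V_N x_i\|_{\mathcal{Y}_r} = \sqrt{N}\|x_i\|_{L^2_r(\R)}$ from \eqref{eq_Vbound}, by $O(|h_3| \cdot \|h_1\|_{L^2_r(\R)}) = o(\|\boldsymbol{h}\|_{\mathcal{X}_r})$. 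This establishes that $F_1$ has the Fréchet derivative $\boldsymbol{h} \mapsto A(u) V_N h_1 + A'(u) h_3 (V_N x_1)$ at $(\boldsymbol{x}, u)$; applying the same argument to $F_2$ and summing yields \eqref{eq_DefFrechet}.

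To upgrade to \emph{continuous} Fréchet differentiability, I would show that $(\boldsymbol{x}, u) \mapsto F'(\boldsymbol{x}, u)$ is continuous into the space of bounded linear maps from $\mathcal{X}_r$ to $\mathcal{Y}_s$. Applying the difference $F'(\boldsymbol{x}, u) - F'(\boldsymbol{x}_0, u_0)$ to an arbitrary $\boldsymbol{h}$ of unit $\mathcal{X}_r$-norm produces three kinds of summand (together with their $B$-analogues): $(A(u)-A(u_0)) V_N h_1$, $A'(u) h_3 (V_N (x_1 - x_{1,0}))$, and $(A'(u) - A'(u_0)) h_3 (V_N x_{1,0})$. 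These are bounded, respectively, by $\|A(u)-A(u_0)\|_{\mathcal{Y}_r\to\mathcal{Y}_s}$, by $\|A'(u)\| \cdot \|x_1 - x_{1,0}\|_{L^2_r(\R)}$ (with $\|A'(u)\|$ locally bounded near $u_0$), and by $\|A'(u) - A'(u_0)\| \cdot \|x_{1,0}\|_{L^2_r(\R)}$, each of which vanishes as $(\boldsymbol{x}, u) \to (\boldsymbol{x}_0, u_0)$: the first by continuity of $A$, the remaining two by continuity of $A'$, both supplied by \cref{Lemma 2.7}.

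The main obstacle is conceptual rather than computational: it is the smoothness gap $r \geq s+2$ mandated by \cref{Lemma 2.7}. Differentiating $e^{-i\omega L_k/u}$ in $u$ pulls out a factor $i\omega L_k / u^2$, costing one Sobolev order, while the quadratic Taylor remainder costs a second. Once \cref{Lemma 2.7} is in hand, the present theorem reduces to a clean product-rule bookkeeping exercise, combined with the uniform estimates $\|A(u)\|_{\mathcal{Y}_r\to\mathcal{Y}_s}, \|B(u)\|_{\mathcal{Y}_r\to\mathcal{Y}_s} \leq 1$ already derived in \eqref{ABnorm}.
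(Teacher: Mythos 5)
Your proposal is correct and follows essentially the same route as the paper: both reduce the theorem to \cref{Lemma 2.7} together with the boundedness of $V_N$ from \eqref{eq_Vbound}, exploiting that $F$ is affine in $(x_1,x_2)$ and nonlinear only through $A(u)$, $B(u)$. The only difference is one of packaging: the paper cites the product and chain rules for Fr\'echet derivatives and reads off the three partial derivatives, whereas you carry out the first-order expansion by hand with explicit control of the remainder and the mixed term $[A(u+h_3)-A(u)]V_N h_1$, and you additionally spell out the operator-norm continuity of $(\boldsymbol{x},u)\mapsto F'(\boldsymbol{x},u)$, which the paper leaves implicit in its appeal to the product rule.
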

\begin{proof}
Recall first that by the definition \eqref{eq_def:F} of $F$ there holds
    \begin{equation*}
        F(\boldsymbol{x},u) = A(u)V_Nx_1 + B(u)V_Nx_2\,.
    \end{equation*}
Due to \eqref{eq_Vbound} the linear operator $V_N$ is bounded, and thus also continuously Fr\'echet differentiable. Furthermore, from \cref{Lemma 2.7} we also know that both the operators $A$ and $B$ are continuously Fr\'echet differentiable. Now let $D_{x_1}$, $D_{x_2}$, and $D_u$ denote the (partial) Fr\'echet derivatives with respect to the variables $x_1$, $x_2$, and $u$. Then the product rule of Fr\'echet derivatives, cf.~\cite[Theorem~8.9.2]{Dieudonne1969}, yields that $F$ is continuously Fr\'echet differentiable with
    \begin{equation*}
        F'(\boldsymbol{x},u) \boldsymbol{h}
        = 
        D_{x_1} F(\boldsymbol{x},u) h_1
        + 
        D_{x_2} F(\boldsymbol{x},u) h_2
        +
        D_{u} F(\boldsymbol{x},u) h_3 \,.
    \end{equation*}
From the chain rule and the linearity of the operator $V_N$ we obtain
    \begin{equation*}
    \begin{split}
        D_{x_1} F(\boldsymbol{x},u) h_1
        &=
        D_{x_1} (A(u)V_N x_1)h_1 = A(u) D_{x_1} (V_N x_1)h_1 = A(u)V_Nh_1 \,,
        \\
        D_{x_2} F(\boldsymbol{x},u) h_2
        &=
        D_{x_2} (B(u)V_N x_2)h_2 = B(u) D_{x_2}(V_N x_2)h_2 = B(u)V_Nh_2 \,,
        \\
        D_{u} F(\boldsymbol{x},u) h_3 
        &=
        D_u \kl{ A(u)V_Nx_1 + B(u)V_Nx_2 }h_3
        =
        A'(u)(V_Nx_1) h_3 + B'(u)(V_Nx_2) h_3
        \,,
    \end{split}
    \end{equation*}
which yields the assertion.
\end{proof}

To compute the adjoint of the Fr\'echet derivative \eqref{eq_DefFrechet}, we first need the following
\begin{proposition}
Let $r,s\geq0$ be such that $r\geq s+2$, $u\in\mathcal{U}$, and let $A(u),B(u):\mathcal{Y}_r\to \mathcal{Y}_s$ be defined as in \eqref{ABmatrixdef}. Then, their adjoints $A(u)^\ast,B(u)^\ast:\mathcal{Y}_s\to \mathcal{Y}_r$ are given by
\begin{equation}\label{eq:adjoint_AB}
\begin{aligned}
   &A(u)^\ast\boldsymbol{y}(\omega)=\mathrm{diag}\left(\left(E_{r,s}^\ast e_k(\omega,u)^{-1}\right)_{k=1}^N\right)\boldsymbol{y}(\omega),\\
   &B(u)^\ast\boldsymbol{y}(\omega)=\mathrm{diag}\left(\left(E_{r,s}^\ast \Tilde{e}_k(\omega,u)^{-1}\right)_{k=1}^N\right)\boldsymbol{y}(\omega),
\end{aligned}
\end{equation}
where $E_{r,s}^\ast$ denotes the adjoint of the embedding operator $E_{r,s}:L^2_r(\R)\to L^2_s(\R):\; E_{r,s} x=x.$
\end{proposition}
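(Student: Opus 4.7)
Both $A(u)$ and $B(u)$ are diagonal multiplication operators acting component-wise by a unimodular symbol, so I would treat the identity for $A(u)^\ast$ in detail and remark that the identical argument yields the formula for $B(u)^\ast$ with $\Tilde{e}_k$ in place of $e_k$. The strategy is the direct one: for arbitrary $\boldsymbol{y}\in\mathcal{Y}_r$ and $\boldsymbol{z}\in\mathcal{Y}_s$, compute $\langle A(u)\boldsymbol{y},\boldsymbol{z}\rangle_{\mathcal{Y}_s}$ explicitly from the definition \eqref{norms} of the inner product, rearrange the weight $(1+|\omega|^2)^s$ so as to expose a factor of $(1+|\omega|^2)^r$, and then read off $A(u)^\ast$ from the defining identity $\langle A(u)\boldsymbol{y},\boldsymbol{z}\rangle_{\mathcal{Y}_s}=\langle \boldsymbol{y},A(u)^\ast\boldsymbol{z}\rangle_{\mathcal{Y}_r}$.

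The one auxiliary fact that I would dispose of first is the concrete form of $E_{r,s}^\ast$. A short direct calculation from the definition of the weighted $L^2$-inner products yields $(E_{r,s}^\ast g)(\omega)=(1+|\omega|^2)^{s-r}g(\omega)$, and since $r\geq s$ this defines a bounded operator from $L^2_s(\R)$ into $L^2_r(\R)$. With this in hand, the main computation is
\begin{equation*}
    \langle A(u)\boldsymbol{y},\boldsymbol{z}\rangle_{\mathcal{Y}_s}
    =\sum_{k=1}^{N}\int_{\R}(1+|\omega|^2)^s\, e_k(\omega,u)\, y_k(\omega)\,\overline{z_k(\omega)}\,\mathrm{d}\omega,
\end{equation*}
where the key observation is that $|e_k(\omega,u)|=1$ and therefore $\overline{e_k(\omega,u)}=e_k(\omega,u)^{-1}$. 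Inserting a factor $(1+|\omega|^2)^{r-s}\cdot(1+|\omega|^2)^{s-r}$ and moving the (real) weight $(1+|\omega|^2)^{s-r}$ together with $e_k^{-1}$ under the conjugation transforms each summand into an $L^2_r$-inner product of $y_k$ against $E_{r,s}^\ast\bigl(e_k(\cdot,u)^{-1}z_k(\cdot)\bigr)$, which is exactly the formula claimed in \eqref{eq:adjoint_AB}. Replacing $e_k$ by $\Tilde{e}_k$ throughout gives $B(u)^\ast$.

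I do not anticipate any genuine obstacle; the result is essentially bookkeeping on weighted inner products. The two points that require a moment of care are (i) tracking which of the exponentials receives the complex conjugation, so that $e_k$ becomes $e_k^{-1}$ (rather than an unrelated phase), and (ii) verifying that the output of $A(u)^\ast$ actually lies in $\mathcal{Y}_r$. The latter follows from $|e_k|=1$ together with the decay weight $(1+|\omega|^2)^{s-r}$ supplied by $E_{r,s}^\ast$, so in fact the assumption $r\geq s$ alone suffices for the conclusion of the proposition; the stronger hypothesis $r\geq s+2$ inherited from the preceding lemma is not needed here.
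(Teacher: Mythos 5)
Your argument is correct and takes essentially the same route as the paper's own proof: both move the unimodular symbol across the weighted inner product via $\overline{e_k(\cdot,u)}=e_k(\cdot,u)^{-1}$ and absorb the change of weight into the adjoint embedding $E_{r,s}^\ast$, whose explicit form $(1+|\omega|^2)^{s-r}$ the paper records in the remark immediately following the theorem on $F'(\boldsymbol{x},u)^\ast$. Your side observation that $r\geq s$ already suffices for this proposition is also accurate; the hypothesis $r\geq s+2$ is merely inherited from the surrounding differentiability results.
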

\begin{proof}
Let $\boldsymbol{y}=(y_1,\ldots,y_N)\in \mathcal{Y}_r$ and $\boldsymbol{z}=(z_1,\ldots,z_N)\in \mathcal{Y}_s,$ then
 \begin{equation*}
     \begin{aligned}
          \spr{A(u)\boldsymbol{y},\boldsymbol{z}}_{\mathcal{Y}_s}
          &=\sum_{k=1}^N\spr{e_k(\bullet,u)y_k,z_k}_{L^2_s(\R)}\\
          &=\sum_{k=1}^N\spr{E_{r,s}y_k,\overline{e_k(\bullet,u)} z_k}_{L^2_s(\R)},\\
          &=\sum_{k=1}^N\spr{y_k,E_{r,s}^\ast e_k(\bullet,u)^{-1}z_k}_{L^2_r(\R)}=\spr{A(u)^\ast \boldsymbol{y},\boldsymbol{z}},
     \end{aligned}
 \end{equation*}  
 where we used $\overline{e_k(\bullet,u)}=e_k(\bullet,u)^{-1}.$
The result for $B(u)^\ast$ follows analogously.
\end{proof}
\begin{proposition}
    Let $\boldsymbol{y}=(y_k)_{k=1}^N\in \mathcal{Y}_r$. The adjoint of the operator $V_N:L^2_r(\R)\to \mathcal{Y}_r$ is given by $V_N^\ast \boldsymbol{y} =\sum_{k=1}^Ny_k.$ 
\end{proposition}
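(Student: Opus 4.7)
The plan is to verify the defining property of the adjoint directly, mirroring the structure of the preceding proof for $A(u)^\ast$ and $B(u)^\ast$. Concretely, I would start from arbitrary $x\in L^2_r(\R)$ and $\boldsymbol{y}=(y_k)_{k=1}^N \in \mathcal{Y}_r$ and compute $\langle V_N x,\boldsymbol{y}\rangle_{\mathcal{Y}_r}$ by substituting the definition $(V_N x)_k = x$ from \eqref{ABmatrixdef} together with the inner product on $\mathcal{Y}_r$ from \eqref{norms}, obtaining
\begin{equation*}
    \langle V_N x,\boldsymbol{y}\rangle_{\mathcal{Y}_r}
    = \sum_{k=1}^N \langle x, y_k\rangle_{L^2_r(\R)}.
\end{equation*}

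Next, I would use the (conjugate) linearity of the $L^2_r(\R)$-inner product in its second slot to pull the sum inside, yielding
\begin{equation*}
    \sum_{k=1}^N \langle x, y_k\rangle_{L^2_r(\R)}
    = \Big\langle x, \sum_{k=1}^N y_k \Big\rangle_{L^2_r(\R)}.
\end{equation*}
Since this equality holds for every $x\in L^2_r(\R)$, by the uniqueness of the adjoint one concludes $V_N^\ast \boldsymbol{y} = \sum_{k=1}^N y_k$.

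For this to be a well-defined element of $L^2_r(\R)$ I would note briefly that $L^2_r(\R)$ is a vector space, so the finite sum $\sum_{k=1}^N y_k$ indeed lies in $L^2_r(\R)$; this also confirms that $V_N^\ast$ maps $\mathcal{Y}_r\to L^2_r(\R)$ as required. There is no real obstacle here: the statement reduces to a one-line application of bilinearity of the direct-sum inner product, and unlike the $A(u)^\ast,B(u)^\ast$ case no change of weight via the embedding $E_{r,s}^\ast$ is needed, since both source and target of $V_N$ are indexed by the same weight $r$.
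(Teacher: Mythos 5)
Your proof is correct and follows essentially the same route as the paper's: compute $\langle V_N x,\boldsymbol{y}\rangle_{\mathcal{Y}_r}=\sum_{k=1}^N\langle x,y_k\rangle_{L^2_r(\R)}=\langle x,\sum_{k=1}^N y_k\rangle_{L^2_r(\R)}$ and read off the adjoint. The extra remarks on well-definedness and uniqueness of the adjoint are fine but not needed beyond what the paper already does.
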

\begin{proof}
    Let $x\in L^2_r(\R),\,\boldsymbol{y}\in \mathcal{Y}_r,$ then
\begin{equation*}
    \spr{V_N x,\boldsymbol{y}}=\sum_{k=1}^N\spr{x,y_k}_{L^2_r(\R)}=\spr{x,\sum_{k=1}^Ny_k}_{L^2_r(\R)},
\end{equation*}
which yields $V_N^\ast \boldsymbol{y} =\sum_{k=1}^Ny_k.$ 
\end{proof}
Now we combine the above results into the following
\begin{theorem}\label{thm:adjoint_frechet}
Let $r,s\geq0$ be such that $r\geq s+2,$ $(\boldsymbol{x},u)=(x_1,x_2,u)\in \mathcal{X}_r$, $\boldsymbol{y}=(y_1,\ldots,y_N)\in \mathcal{Y}_s$, and let $F'(\boldsymbol{x},u):\mathcal{X}_r\to \mathcal{Y}_s$ be defined as in \eqref{eq_DefFrechet}. Then, its adjoint $F'(\boldsymbol{x},u)^\ast:\mathcal{Y}_s\to \mathcal{X}_r$ is given by
\begin{equation}\label{eq:adjoint_frechet}
    F'(\boldsymbol{x},u)^\ast\boldsymbol{y} = \left(\begin{array}{cc}
   V_N^\ast A(u)^\ast \boldsymbol{y} 
    \\
   V_N^\ast B(u)^\ast \boldsymbol{y} 
    \\
    \spr{A'(u)V_N(x_1)+B'(u)V_N(x_2),\boldsymbol{y}}_{\mathcal{Y}_s}
    \end{array}\right)\, .
\end{equation}
\end{theorem}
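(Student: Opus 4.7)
The plan is to verify the identity \eqref{eq:adjoint_frechet} by computing $\spr{F'(\boldsymbol{x},u)\boldsymbol{h}, \boldsymbol{y}}_{\mathcal{Y}_s}$ for an arbitrary increment $\boldsymbol{h}=(h_1,h_2,h_3)\in\mathcal{X}_r$ and matching the result against $\spr{\boldsymbol{h}, F'(\boldsymbol{x},u)^\ast \boldsymbol{y}}_{\mathcal{X}_r}$ as expanded via the inner product \eqref{norms}. Inserting the explicit formula \eqref{eq_DefFrechet} for $F'(\boldsymbol{x},u)\boldsymbol{h}$ and using linearity of the $\mathcal{Y}_s$-inner product in its first slot, the computation splits into four summands corresponding to $A(u)V_Nh_1$, $B(u)V_Nh_2$, $A'(u)(V_Nx_1)h_3$, and $B'(u)(V_Nx_2)h_3$.

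The first two summands reduce by a routine adjoint shuffle: applying \eqref{eq:adjoint_AB} once and then the preceding proposition $V_N^\ast \boldsymbol{z}=\sum_k z_k$ gives
\begin{equation*}
    \spr{A(u)V_N h_1, \boldsymbol{y}}_{\mathcal{Y}_s}
    = \spr{V_N h_1, A(u)^\ast \boldsymbol{y}}_{\mathcal{Y}_r}
    = \spr{h_1, V_N^\ast A(u)^\ast \boldsymbol{y}}_{L^2_r(\R)},
\end{equation*}
which is exactly the contribution of the first component of \eqref{eq:adjoint_frechet} to $\spr{\boldsymbol{h}, F'(\boldsymbol{x},u)^\ast\boldsymbol{y}}_{\mathcal{X}_r}$; the analogous manipulation with $B$ in place of $A$ delivers the second component.

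The remaining two terms warrant a brief additional observation: by \eqref{A_derivative}, both $A'(u)h_3$ and $B'(u)h_3$ are $\R$-linear in the scalar argument $h_3$, so $h_3$ factors out of both the operators and the inner product, yielding
\begin{equation*}
    \spr{A'(u)(V_Nx_1)h_3 + B'(u)(V_Nx_2)h_3, \boldsymbol{y}}_{\mathcal{Y}_s}
    = h_3 \spr{A'(u)V_Nx_1 + B'(u)V_Nx_2, \boldsymbol{y}}_{\mathcal{Y}_s},
\end{equation*}
where on the right the first-slot expressions are evaluated at $h_3=1$, consistent with the notation of the theorem. Because the third component of the $\mathcal{X}_r$-inner product is just the scalar product $h_3\cdot c$, this matches precisely the third component of $F'(\boldsymbol{x},u)^\ast\boldsymbol{y}$.

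The main obstacle is purely notational, namely keeping straight that $A'(u)(V_Nx_1)h_3$ denotes the image of $V_Nx_1\in\mathcal{Y}_r$ under the bounded linear operator $A'(u)h_3\in\mathcal{L}(\mathcal{Y}_r,\mathcal{Y}_s)$, and that the third slot of \eqref{eq:adjoint_frechet} is correspondingly the scalar obtained with $h_3=1$. Once this is in order, summing the three identified contributions and comparing with \eqref{norms} yields $\spr{F'(\boldsymbol{x},u)\boldsymbol{h}, \boldsymbol{y}}_{\mathcal{Y}_s}=\spr{\boldsymbol{h}, F'(\boldsymbol{x},u)^\ast\boldsymbol{y}}_{\mathcal{X}_r}$ for every $\boldsymbol{h}\in\mathcal{X}_r$, which is the claim; no analytic estimates beyond the adjoint formulas already established in the two preceding propositions are required.
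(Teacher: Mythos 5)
Your proposal is correct and follows essentially the same route as the paper: expand $\spr{F'(\boldsymbol{x},u)\boldsymbol{h},\boldsymbol{y}}_{\mathcal{Y}_s}$ via \eqref{eq_DefFrechet}, move $A(u)$, $B(u)$, and $V_N$ across the inner product using the two preceding propositions, and factor the scalar $h_3$ out of the derivative terms to identify the third component. The extra remark on reading $A'(u)(V_Nx_1)h_3$ as the operator $A'(u)h_3$ applied to $V_Nx_1$ is a helpful clarification but does not change the argument.
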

\begin{proof}
For $\boldsymbol{h}=(h_1,h_2,h_3)\in\mathcal{X}_r$ we can directly calculate
\begin{equation*}
    \begin{aligned}
        \spr{F'(\boldsymbol{x},u)\boldsymbol{h},\boldsymbol{y}}_{\mathcal{Y}_s}&=
        \spr{A(u)V_Nh_1,\boldsymbol{y}}_{\mathcal{Y}_s}+
        \spr{B(u)V_Nh_2,\boldsymbol{y}}_{\mathcal{Y}_s}\\
        &\quad+\spr{A'(u)(V_Nx_1)h_3+B'(u)(V_Nx_2)h_3,\boldsymbol{y}}_{\mathcal{Y}_s}\\
        &=
        \spr{h_1,V_N^\ast A(u)^\ast \boldsymbol{y}}_{L_r^2(\R)}+\spr{h_2,V_N^\ast B(u)^\ast \boldsymbol{y}}_{L_r^2(\R)}\\
        &\quad+h_3\spr{A'(u)V_Nx_1+B'(u)V_Nx_2,\boldsymbol{y}}_{\mathcal{Y}_s}\, ,
    \end{aligned}
\end{equation*}
which yields the assertion.
\end{proof}

Note that the adjoint embedding operator $E^\ast_{r,s}$ is also implicitly appearing in the operator $F'(\boldsymbol{x},u)^\ast$ as a component of $A(u)^\ast$, which leads us to the following remark.
\begin{remark}
    The adjoint embedding operator can be explicitly computed via
    \begin{equation*}
        \begin{aligned}
            \langle\, E_{r,s} {x},{y}\,\rangle_{L_s^2(\R)}
            &=\langle\,{x},{y}\,\rangle_{L_s^2(\R)}=\int_\R (1+|\omega|^2)^s{x}(\omega){y}(\omega)\,\mathrm{d}\omega
            \\&=\int_\R (1+|\omega|^2)^r{x}(\omega)(1+|\omega|^2)^{s-r}{y}(\omega),\mathrm{d}\omega
            \\&=\spr{{x},(1+|\bullet|^2)^{s-r}y}_{L_r^2(\R)}
            =:\spr{{x},E_{r,s}^\ast {y}}_{L_r^2(\R)} \,, 
        \end{aligned}
    \end{equation*}
    for ${x}\in L^2_r(\R)$ and ${y}\in L_s^2(\R)$, which implies that $E_{r,s}^\ast {y}(\omega)=(1+|\omega|^2)^{s-r} {y}(\omega).$
    Using this, we can write the adjoint of the Fr\'echet derivative of $F$ given in \eqref{eq:adjoint_frechet} as
    \begin{equation*}
   F'(\boldsymbol{x},u)^\ast\boldsymbol{y}(\omega) = \sum_{k=1}^N\left(\begin{array}{cc}
    (1+|\omega|^2)^{s-r} y_k(\omega) e_k(\omega,u)^{-1}  
    \\
    (1+|\omega|^2)^{s-r} y_k(\omega) \Tilde{e}_k(\omega,u)^{-1} 
    \\
    \spr{x_1\frac{i\bullet L_k}{u^2}e_k(\bullet,u),y_k}_{L_s^2(\R)}+\spr{x_2\frac{i\bullet (L_N-L_k)}{u^2}\Tilde{e}_k(\bullet,u),y_k}_{L_s^2(\R)}
    \end{array}\right)\, .
\end{equation*}
\end{remark}

After analyzing the nonlinear operator $F$ describing the pulse wave splitting and PWV estimation problem simultaneously, we now derive similar results for an operator describing only the linear pulse wave splitting \cref{prob2}, which is defined in the following 

\begin{definition}\label{def:F_u}
Let $r\geq s\geq0,$ $N\in\mathbb{N}$, $u > 0$, and $L_k\in\mathbb{R}^+$ for $k=1,\ldots,N$ be given. Then for $\boldsymbol{x}=(x_1,x_2)\in (L^2_r(\R))^2$ the operator $F_u:(L^2_r(\R))^2 \to \mathcal{Y}_s$ is defined as $F_u(\boldsymbol{x}):=F(\boldsymbol{x},u),$ with $F$ as in \cref{def:F}.
\end{definition}

With this definition, the linear problem of (only) pulse wave splitting resulting from \eqref{fundamental_frequency}, i.e., \cref{prob2} can now be stated in operator form as
    \begin{equation}\label{lin_prob}
        F_u(\hat{p}_{1f},\hat{p}_{Nb})=(\hat{p}_k)_{k=1}^N,
    \end{equation}
which allows it to be formalized into the following

\begin{problem}[Operator form of pulse wave splitting in frequency domain]\label{prob4}
Let $r\geq s\geq0,$ $N\geq 2$, $u\geq\varepsilon>0$ and, for all $k=1,\ldots,N$, let $L_k\in\R^+$ be given. Furthermore, let $F_u$ be as in \cref{def:F_u} and let $(\hat{p}_k)_{k=1}^N\in \mathcal{Y}_s$. Then the pulse wave splitting problem is defined as finding $(\hat{p}_{1f},\hat{p}_{Nb})\in (L^2_r(\R))^2$ such that the linear operator equation \eqref{lin_prob}
holds.
\end{problem}

Similarly to above, we now obtain the adjoint of $F_u$ in the subsequent

\begin{corollary}\label{cor:adjoint}
Let $r\geq s\geq0$ and let $F_u:(L^2_{r}(\R))^2 \to \mathcal{Y}_s$ be as in \cref{def:F_u}.  Then, for $\boldsymbol{y}=(y_1,\ldots,y_N)$ its adjoint operator $F_u^\ast:\mathcal{Y}_s \to (L^2_{r}(\R))^2$ is given by 
 \begin{equation*}
    F_u^\ast\boldsymbol{y} = \left(\begin{array}{cc}
     V_N^\ast A(u)^\ast \boldsymbol{y} 
    \\
    V_N^\ast B(u)^\ast \boldsymbol{y} 
    \end{array}\right)\,.
\end{equation*}
\end{corollary}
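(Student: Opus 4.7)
The plan is to reduce this corollary to a direct consequence of the adjoint computations already established in \cref{thm:adjoint_frechet}. Since $F_u$ is obtained from $F$ by freezing the PWV variable $u$, the operator $F_u$ is linear in $\boldsymbol{x}=(x_1,x_2)$ and there is no third component in the domain. Consequently, the third row of the adjoint formula in \eqref{eq:adjoint_frechet}, which arose from the partial derivative with respect to $u$, simply drops out.

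Concretely, I would start from the definition
\begin{equation*}
    F_u(\boldsymbol{x}) = A(u)V_N x_1 + B(u) V_N x_2 \,,
\end{equation*}
pick arbitrary $\boldsymbol{h}=(h_1,h_2)\in (L_r^2(\R))^2$ and $\boldsymbol{y}=(y_1,\ldots,y_N)\in \mathcal{Y}_s$, and compute the duality pairing
\begin{equation*}
    \spr{F_u \boldsymbol{h},\boldsymbol{y}}_{\mathcal{Y}_s}
    = \spr{A(u)V_N h_1, \boldsymbol{y}}_{\mathcal{Y}_s} + \spr{B(u)V_N h_2, \boldsymbol{y}}_{\mathcal{Y}_s} \,.
\end{equation*}
Moving the operators to the right-hand slot via the adjoint identities from the two preceding propositions (namely $A(u)^\ast, B(u)^\ast$ as in \eqref{eq:adjoint_AB}, and $V_N^\ast \boldsymbol{z}=\sum_k z_k$) yields
\begin{equation*}
    \spr{F_u \boldsymbol{h},\boldsymbol{y}}_{\mathcal{Y}_s}
    = \spr{h_1, V_N^\ast A(u)^\ast \boldsymbol{y}}_{L_r^2(\R)} + \spr{h_2, V_N^\ast B(u)^\ast \boldsymbol{y}}_{L_r^2(\R)} \,,
\end{equation*}
which, compared with the canonical inner product on $(L_r^2(\R))^2$, immediately identifies the two components of $F_u^\ast \boldsymbol{y}$ as stated.

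There is essentially no obstacle here: well-definedness of $F_u^\ast$ follows from boundedness of $F_u$ (a direct consequence of \cref{thm:frechet} with the $u$-variation switched off, or alternatively from the bound $\sqrt{2N}\|(x_1,x_2,u)\|_{\mathcal{X}_r}$ established before), and all adjoint identities needed have already been proved in the two propositions immediately preceding the corollary. The only mild bookkeeping is to make sure the embedding operator $E_{r,s}^\ast$ hidden inside $A(u)^\ast$ and $B(u)^\ast$ is handled consistently; this is already built into the definitions, so no extra argument is required.
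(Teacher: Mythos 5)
Your proposal is correct and follows essentially the same route as the paper, which likewise disposes of the corollary by noting that $F_u$ is the restriction of $F$ to the arguments $(x_1,x_2)$ and invoking \cref{thm:adjoint_frechet}. The explicit duality-pairing computation you include is just the first two lines of the paper's proof of \cref{thm:adjoint_frechet} with the $h_3$-term omitted, so it adds detail but no new idea.
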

\begin{proof}
This is a consequence of \cref{thm:adjoint_frechet} and the fact that $F_u$ is a restriction of $F$ defined in \eqref{eq_def:F} onto its arguments $(x_1,x_2)$.
\end{proof}
\begin{remark}
    Even though the existence of the Fr\'echet derivative of the nonlinear operator $F$ requires a smoothness assumption $r\geq s+2,$ no such assumption is needed in the linear case involving the operator $F_u$, i.e., the smoothness requirement stems from the parameter $u$. 
\end{remark}
The operator $F$ given in \eqref{eq_def:F} acts linearly on the functions $x_1,x_2$, and nonlinearly on the PWV $u$. This motivates our later development of solution methods that alternate between these variables. For this, it is convenient to also define the restriction of $F$ on only the PWV $u$ in the next definition.

\begin{definition}\label{def:F_x}
Let $r\geq s\geq0,\,N\in\mathbb{N},\,\boldsymbol{x}=(x_1,x_2)\in(L^2_{r}(\R))^2$ and $L_k\in\mathbb{R}^+$ for $k=1,\ldots,N$ be given. Then, for $u\in\mathcal{U}$ the operator $F_{\boldsymbol{x}}:\mathcal{U} \to \mathcal{Y}_s$ is defined as 
$F_{\boldsymbol{x}} (u)(\omega):=F(\boldsymbol{x},u).$
\end{definition}

The Fr\'echet derivative of $F_{\boldsymbol{x}}$ and its adjoint are computed in the following 

\begin{corollary}
Let $r,s\geq 0$ be such that $r\geq s+2$ and $\boldsymbol{x}=(x_1,x_2)\in (L^2_r(\R))^2$. Then, for $F_{\boldsymbol{x}}:\mathcal{U}\to \mathcal{Y}_s$ as in \cref{def:F_x} the Fr\'echet derivative $F_{\boldsymbol{x}}'(u):\mathcal{U}\to \mathcal{Y}_s$ is given by
\begin{equation*}
F'_{\boldsymbol{x}}(u)h= A'(u)(V_nx_1)h+B'(u)(V_Nx_2)h\, .
\end{equation*}
Its adjoint $F_{\boldsymbol{x}}'(u)^\ast:\mathcal{Y}_s\to \mathcal{U}$ is given by 
\begin{equation*}
    F_{\boldsymbol{x}}'(u)^\ast \boldsymbol{y} = \spr{A'(u)V_Nx_1+B'(u)V_Nx_2,\boldsymbol{y}}_{\mathcal{Y}_s} \,.
\end{equation*}
\end{corollary}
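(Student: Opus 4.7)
The proof will follow almost immediately from the results already established for the full operator $F$, since $F_{\boldsymbol{x}}$ is nothing but the restriction of $F$ to its third argument with $\boldsymbol{x}=(x_1,x_2)$ held fixed. The plan is to identify $F_{\boldsymbol{x}}'(u)$ with the partial Fr\'echet derivative $D_u F(\boldsymbol{x},u)$ extracted in the proof of \cref{thm:frechet}, and then to compute the adjoint by exploiting the one-dimensionality of the parameter space $\mathcal{U}\subset\R$.

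For the first claim, I would start by noting that by \cref{def:F_x} we have $F_{\boldsymbol{x}}(u) = A(u) V_N x_1 + B(u) V_N x_2$, where $\boldsymbol{x}$ does not vary. Since the assumption $r\geq s+2$ is in force, \cref{Lemma 2.7} yields the continuous Fr\'echet differentiability of $A$ and $B$ as maps $\mathcal{U}\to(\mathcal{Y}_r\to\mathcal{Y}_s)$. The chain rule (applied to $u\mapsto A(u)$ composed with evaluation at the fixed element $V_N x_1\in\mathcal{Y}_r$, and analogously for $B$) together with linearity of $V_N$ then directly produces
\begin{equation*}
    F_{\boldsymbol{x}}'(u)h = A'(u)(V_N x_1)h + B'(u)(V_N x_2)h \,.
\end{equation*}
Equivalently, this is exactly the summand $D_u F(\boldsymbol{x},u) h$ already isolated inside the proof of \cref{thm:frechet}, so no new calculation is required.

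For the adjoint, the key observation is that $\mathcal{U}$ inherits the standard inner product from $\R$, so $F_{\boldsymbol{x}}'(u)^\ast\boldsymbol{y}$ is simply a real number. Applying the definition of the adjoint, for any $h\in\mathcal{U}$ and $\boldsymbol{y}\in\mathcal{Y}_s$ I would compute
\begin{equation*}
    \spr{F_{\boldsymbol{x}}'(u) h,\boldsymbol{y}}_{\mathcal{Y}_s}
    = \spr{\bigl(A'(u) V_N x_1 + B'(u) V_N x_2\bigr)h,\boldsymbol{y}}_{\mathcal{Y}_s}
    = h\cdot\spr{A'(u) V_N x_1 + B'(u) V_N x_2,\boldsymbol{y}}_{\mathcal{Y}_s}\,,
\end{equation*}
using that $A'(u)$ and $B'(u)$ act linearly on the scalar factor $h$. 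Matching this against $\spr{h,F_{\boldsymbol{x}}'(u)^\ast\boldsymbol{y}}_{\mathcal{U}} = h\cdot F_{\boldsymbol{x}}'(u)^\ast\boldsymbol{y}$ immediately identifies the claimed expression for the adjoint.

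There is no real obstacle in this argument; the only subtle point is the status of $\mathcal{U}=[\varepsilon,\infty)$, which is a closed half-line rather than an open set. However, since we only ever form the Fr\'echet derivative of $F_{\boldsymbol{x}}$ at interior points $u>\varepsilon$ (or accept it as a one-sided derivative at the boundary), this causes no trouble, and the formula derived above is valid on all of $\mathcal{U}$. All remaining steps are routine applications of bilinearity of the inner product and the formulas \eqref{A_derivative} for $A'$ and $B'$.
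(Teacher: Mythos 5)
Your argument is correct and follows essentially the same route as the paper, which simply cites \cref{thm:frechet} and \cref{thm:adjoint_frechet} together with the observation that $F_{\boldsymbol{x}}$ is the restriction of $F$ to its argument $u$; your identification of $F_{\boldsymbol{x}}'(u)h$ with the partial derivative $D_u F(\boldsymbol{x},u)h$ and the scalar-matching computation of the adjoint are exactly the content of those two theorems specialized to the third component. The extra detail you provide (including the remark about $\mathcal{U}$ being a closed half-line) is sound but not needed beyond what the paper's one-line proof already invokes.
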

\begin{proof}
The assertion follows from \cref{thm:frechet} and \cref{thm:adjoint_frechet} and the fact that $F_{\boldsymbol{x}}$ is a restriction of $F$ defined in \eqref{eq_def:F} onto its argument $u$.
\end{proof}
% % % % % % % % % % % % % % % % % % % %
% Section - Reconstruction Approaches %
% % % % % % % % % % % % % % % % % % % %
\section{Reconstruction Approaches}\label{sec:reconstruction}

In this section, we introduce the specific solution methods we use to solve \cref{prob3,prob4}. First, consider the operator equation
    \begin{equation}\label{opeq}
        G(x) = y \,,
    \end{equation}
 where $G:X\to Y$ is a continuous, Fr\'echet-differentiable, nonlinear operator between two Hilbert spaces $X,Y$. The general goal in inverse problems is to reconstruct (an approximation of a) solution $x^\ast$ of \eqref{opeq} from a noisy version $y^\delta$ of the true data $y$.
Such problems are considered as ill-posed if they lack existence or uniqueness of a solution, or most importantly, stablility with respect to data noise.
 As a remedy for these stabilization issues, one needs to apply \emph{regularization methods}, such as \emph{Tikhonov regularization} or \emph{Landweber iteration} \cite{Engl_Hanke_Neubauer_1996}. In Tikhonov regularization, the regularized solution minimizes the functional 
    \begin{equation*}
        \mathcal{T}_\alpha^\delta (x) := \|G(x)-y^\delta\|^2+\alpha\|x-x_0\|^2 \,,
    \end{equation*}
 where $\alpha>0$ is a suitably chosen regularization parameter. Under appropriate assumptions on $G$ and $\alpha$, it can be shown that the minimizers of $\mathcal{T}_\alpha^\delta$ converge subsequentially to a minimum norm solution of $G(x)=y$ as $\delta\to 0$ \cite{Engl_Hanke_Neubauer_1996}. In general, no directly computable solution is available for the minimizer of the Tikhonov functional. In particular, this means that iterative optimization methods are applied for minimizing $\mathcal{T}_\alpha^\delta$. Alternatively, iterative algorithms can be applied directly to \eqref{opeq}, such as Landweber iteration \cite{Kaltenbacher_Neubauer_Scherzer_2008}.
In order to function as regularization methods, iterative schemes have to be stopped appropriately, the stopping index then acting as a regularization parameter.
For detailed studies on nonlinear-ill-posed problems and regularization theory we refer to \cite{Engl_Hanke_Neubauer_1996, Kaltenbacher_Neubauer_Scherzer_2008,Louis_1989}.

% Subsection - The Linear Subproblem
\subsection{Linear Tikhonov Regularization}\label{sec:linTikh}

First, we consider Tikhonov regularization for the linear \cref{prob4}, i.e., for the operator equation \eqref{lin_prob}. With $\boldsymbol{\hat{p}}^\delta:=(\hat{p}^\delta)_{k=1}^N$ and the initial guess $\boldsymbol{x}^0=0$, Tikhonov regularization here takes the form
    \begin{equation}\label{eq_helper_01}
        \boldsymbol{x}_\alpha^\delta = \arg\min_{\boldsymbol{x}\in (L^2_r(\R))^2}\|F_u(\boldsymbol{x})-\boldsymbol{\hat{p}}^\delta\|_{\mathcal{Y}_s}^2+\alpha\|\boldsymbol{x}\|_{(L^2_r(\R))^2}^2 \,.     
    \end{equation}
The minimizer of \eqref{eq_helper_01} can be calculated explicitly via
    \begin{equation}\label{linTikh}
        \boldsymbol{x}_\alpha^\delta = (F_u^\ast F_u+\alpha I)^{-1}F_u^\ast \boldsymbol{\hat{p}}^\delta  \,,
    \end{equation}
which gives rise to the following

\begin{method}[linTikh]\label{method:linTikh}
For given data $\boldsymbol{\hat{p}}^\delta=(\hat{p}_k^\delta)_{k=1}^N$ and a known PWV $u$, the \emph{linTikh} method computes $\boldsymbol{x}_\alpha^\delta$ as the minimizer of the linear Tikhonov functional \eqref{eq_helper_01} i.e., via \eqref{linTikh},
where $\alpha$ is a suitably chosen regularization parameter.
\end{method}
The solution computed by Tikhonov regularization converges to the best approximate solution of $F_u(\boldsymbol{x})=\hat{\boldsymbol{p}}$ as $\delta\to 0,$ if $\alpha$ is chosen appropriately in dependence on $\delta$ \cite{Engl_Hanke_Neubauer_1996}.

Note that besides Tikhonov regularization, for $N=2$ also the direct approach defined by \eqref{eq_direct} and \eqref{eq_directreg} can be used to solve the linear problem \eqref{lin_prob}. A numerical comparison of the Tikhonov solution compared to the direct approach is shown in \cref{sec:numerics}.

% Subsection - Tikhonov Regularization for Pulse Wave Splitting
\subsection{Minimization of the Linear Tikhonov Functional}\label{sec:minTikh}

The nonlinearity of \cref{prob3} stems only from the unknown PWV $u$, i.e., from a single scalar parameter. Hence, we propose to make use of the fact that for a fixed $u$, \cref{prob3} reduces to the linear \cref{prob4}. For this, we leverage the fact that from a physical perspective, the value of the PWV is known to approximately lie in the range of $u\in[1\, \text{m/s},10\, \text{m/s}]$. In this rather small range of admissible values for the PWV, we can repeatedly solve the linear subproblem $F_u(\boldsymbol{x})=\boldsymbol{\hat{p}}^\delta$ via the linTikh \cref{method:linTikh} for a finite set $U\subset\mathcal{U}$ of admissible PWVs. The solution of \cref{prob3} is then found within the set of solutions to the linear problems, such that the residual norm is minimized. This approach is summarized in the following

\begin{method}[minTikh]\label{method:minTikh}
For given data $\boldsymbol{\hat{p}}^\delta=(\hat{p}_k^\delta)_{k=1}^N$ let $U=(u_1,\ldots,u_K)$ be a finite set of admissible parameters for the PWV $u$. The \emph{minimizing Tikhonov} method consists of computing an approximation $u_{\alpha}^\delta$ of the PWV via
    \begin{equation}\label{minTikh}
    \begin{aligned}
        \boldsymbol{x}_{\alpha}^\delta(u)&=(F_u^\ast F_u+\alpha I)^{-1}F_u^\ast \boldsymbol{\hat{p}}^\delta \,, \qquad \forall \, u\in U\,,
        \\
        u_\alpha^\delta&=\arg\min_{u\in U} \|F_u(\boldsymbol{x}_{\alpha}^\delta(u))-\boldsymbol{\hat{p}}^\delta\|_{\mathcal{Y}_s} \,,
    \end{aligned}
    \end{equation}
where $\alpha$ is a suitable regularization parameter. The corresponding approximate solution regarding pulse wave splitting is given by $\boldsymbol{x}_{\alpha}^\delta(u_{\alpha}^\delta)$.
\end{method}
The minTikh method performs a search for the optimal PWV $u \in U$ with respect to the (converging) solutions of the linear Tikhonov functionals, i.e., a best approximate solution within $U$ for the PWV is found.

% Subsection - Split method for Pulse Wave Splitting
\subsection{Alternate Direction Method}

As described above, a popular approach for solving nonlinear ill-posed problems is by iterative methods, for example via Landweber iteration. However, due to the system structure of the operator $F$ in \eqref{eq_def:F}, it is beneficial to iteratively approach the solution in alternate steps, switching between the variables on which the problem depends linearly and nonlinearly, respectively. This approach is motivated by coordinate descent methods \cite{Buccini_Donatelli_Ramlau_2018,Ortega_Rheinboldt_2000,Wright2015} or the well-known ADMM (Alternate Direction Method of Multipliers) algorithm \cite{Bleyer_Ramlau_2013,Bleyer_Ramlau_2015,Boyd_Parikh_chu_peleatp_Eckstein_2011,Hong2016} from optimization theory. A benefit of these approaches is that for each of the alternating directions the step-size can be chosen separately and thus more effectively. 

The idea of this approach is to split up the minimization problem 
    \begin{equation*}
        (\boldsymbol{x}_\alpha^\delta,u_\alpha^\delta) = \arg\min_{(\boldsymbol{x},u)\in \mathcal{X}_s}\|F(\boldsymbol{x},u)-\boldsymbol{\hat{p}}^\delta\|^2+\alpha\|\boldsymbol{x}-\boldsymbol{x}^0\|^2 \,,
    \end{equation*}
into two subproblems that can be addressed separately and iteratively.

\begin{method}[ADM]\label{method:adm}
For given data $\boldsymbol{\hat{p}}^\delta=(\hat{p}_k^\delta)_{k=1}^N$, an initial guess $(\boldsymbol{x}^0,u^0)$, and a suitable regularization parameter $\alpha$, the \emph{ADM} method is defined via the iterative procedure
    \begin{equation}\label{adm}
    \begin{aligned}
        \boldsymbol{x}^{k} &= \arg\min_{\boldsymbol{x}\in (L^2_r(\R))^2}\|F_{u^{k-1}}(\boldsymbol{x})-\boldsymbol{\hat{p}}^\delta\|^{2}_{\mathcal{Y}}+\alpha\|\boldsymbol{x}-\boldsymbol{x}^0\|^{2}_{(L^2_r(\R))^2} \,, 
        \\
        u^k&=\arg\min_{u\in U}\|F_{\boldsymbol{x}^k}(u)-\boldsymbol{\hat{p}}^\delta\|^{2}_{\mathcal{Y}_s} \,,
    \end{aligned}
    \end{equation}
for $1\leq k \leq k^\ast$, where $k^\ast$ denotes a suitable stopping index. The approximate solution obtained by the ADM method is given by $(\boldsymbol{x}^{k^\ast},u^{k^\ast})$.
\end{method}

The first minimization problem in \eqref{adm} corresponds to the pulse wave splitting \cref{prob4}, and can for example be solved using \cref{linTikh}. The second minimization problem is a nonlinear optimization problem, for which we employ a gradient descent algorithm or Landweber iteration, respectively, defined by
\begin{equation}\label{landweber}
        u_{k+1} = u_k+\omega_k F_{\boldsymbol{x}}'(u_k)^\ast(\boldsymbol{\hat{p}}^\delta-F_{\boldsymbol{x}}(u_k)) \,,
    \end{equation}
where $x_0^\delta=x_0$ denotes an initial guess and $\omega_k$ is a scaling parameter or \emph{stepsize} \cite{Engl_Hanke_Neubauer_1996,Neubauer_2017,Scherzer_1995}.
We use the \emph{steepest descent} stepsize, given by
    \begin{equation*}
        \omega_k = \frac{\|s_k\|^2}{\|F_{\boldsymbol{x}}'(u_k) s_k\|^2}\,,
        \qquad
        s_k := F_{\boldsymbol{x}}'(u_k)^\ast (\boldsymbol{\hat{p}}^\delta-F_{\boldsymbol{x}}(u_k)) \,.
    \end{equation*} 
The choice of suitable stopping indices is discussed in the next section.

Even though we are not aware of convergence results which cover our specific setting - where one variable direction follows a descent algorithm and the other one solves an infinite-dimensional linear inverse problem - our proposed algorithm can be seen as a modified 2-block Gauß-Seidel method, whose convergence to a stationary point in finite dimensions has been shown in \cite{Grippof_1999}.
From a numerical point of view, we observe local convergence, see \cref{sec:numerics}.
% % % % % % % % % % % % % % % % % %
% Section - Numerical Experiments %
% % % % % % % % % % % % % % % % % %
\section{Numerical Experiments}\label{sec:numerics}

In this section, we describe the discretization of the problem, its implementation, and present reconstruction results for simulated data. For the discretization of the problem, we use $m$ equidistant function evaluations in both time and frequency space. If not specified otherwise, we use $m=500$ in all experiments. All computations have been performed in MATLAB 2022b. The Fourier transform is approximated by the DFT/FFT, and for the evaluation of the FFT the in-built Matlab function \texttt{fft} is used. In all numerical experiments we choose $s=0,$ i.e., we consider the operator $F:\mathcal{X}_r\to (L^2(\R))^N$, since this puts the least smoothness requirements on the (noisy) data $\boldsymbol{\hat{p}}^\delta$. 

Throughout this chapter, noisy data $\boldsymbol{\hat{p}}^\delta=(\hat{p}_k)_{k=1}^N$ fulfills
    \begin{equation*}
        \frac{\|\hat{p}_k-\hat{p}_k^\delta\|}{\|\hat{p}_k\|} = \delta \,, \qquad k=1,\ldots,N,
    \end{equation*}
where $\delta$ denotes the \emph{relative noise level}. However, when dealing with real data pulse waves, the relative noise level is highly dependent on the specific data acquisition method, and is difficult to estimate. In related acquisition methods, a noise level of 5-10\% can realistically be expected, which we use for our simulation setting.
For an approximate solution $(x_1,x_2,u),$ we consider the two error measures
    \begin{equation}\label{error}
        e_{\text{fit}} := \frac{\|(x_1,x_2)-(\hat{p}_{1f},\hat{p}_{Nb})\|_{(L^2_r(\R))^2}}{\|(\hat{p}_{1f},\hat{p}_{Nb})\|_{(L^2_r(\R))^2}}\,,
        \qquad 
        e_{\text{res}}:=\frac{\|F(x_1,x_2,u)-\boldsymbol{\hat{p}}^\delta\|_{\mathcal{Y}_0}}{\|\boldsymbol{\hat{p}}^\delta\|_{\mathcal{Y}_0}}\,,
    \end{equation}
which we call \emph{relative fitting error} and \emph{relative residual error}, respectively. The relative fitting error is a quantity only available if the true solution $(\hat{p}_{1f},\hat{p}_{Nb})$ of the problem $F(\hat{p}_{1f},\hat{p}_{Nb},u)=\boldsymbol{\hat{p}}$ is known, i.e., for simulated data only. As regularization parameter choice rules, we use a-priori guesses for Tikhonov regularization and a fixed maximal number of steps for Landweber iteration. To create a test-function that resembles realistic conditions, we create the backwards travelling wave from the forward wave via several reflective points, each with a different distance and attenuation coefficient, while the forward wave is given by
    $p_{1f}=\sin^3\left(2\pi t\right)+0.3\sin^2\left(2\pi t+1\right),\; t\in[0,1], $
 shifted to zero mean. The distance of the first to last measurement point is always chosen as $L_N=0.15\,\text{m}$, corresponding to a realistic total vessel length in the human brain. For the 3-point experiment, the middle point is at $L_2=0.09\,\text{m}$ and for the 5-point experiment the middle points are given as $(L_2,L_3,L_4)=(0.04\,\text{m},0.09\,\text{m},0.12\,\text{m})$. The duration of one cardiac cycle, chosen as 0.75s, corresponds to a frequency of 80 heart beats per minute. The test-functions $(p_{kf},p_{kb})_{k=1}^N$ as well as the resulting noisy test-data $\boldsymbol{\hat{p}}^\delta$ are depicted in \cref{fig:data}. While the simulation is performed in the frequency domain according to \cref{prob3,prob4}, all figures depict the waves in time-domain, i.e., after inverse Fourier transformation of the obtained solutions.

\begin{figure}[ht!]
    \centering
    \includegraphics[width=.49\textwidth,trim={0 0 20 20},clip]{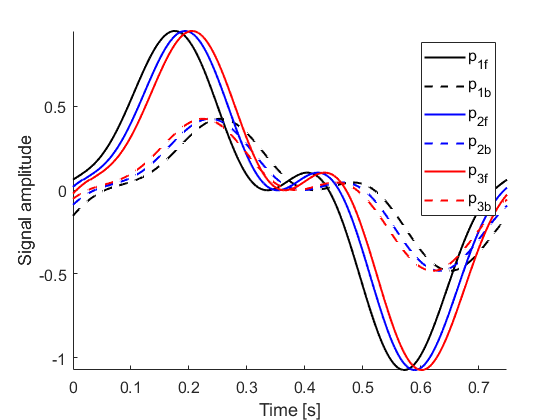}
    \includegraphics[width=.49\textwidth,trim={0 0 20 20},clip]{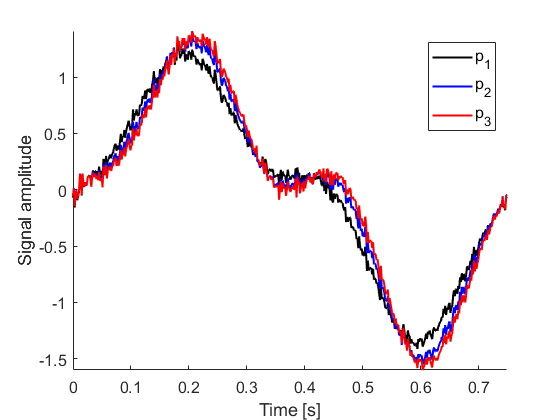}
    \caption{Simulation data for $N=3$. Split waves $p_{1f},p_{2f},p_{3f},p_{1b},p_{2b},p_{3b}$ (left) and total waves $p_1,p_2,p_3$ at each measurement point with added noise level 5\% (right).}
    \label{fig:data}
\end{figure}

% Numerical Results: Simulated Data
\subsection{Numerical Results}

For the first test, we consider the linear problem of pulse wave splitting with a known PWV $u$, i.e., \cref{prob4}. \cref{fig:linearprob} shows the true solution compared to the solution computed by the linTikh \cref{method:linTikh} for several different regularization parameters $\alpha$ and smoothing parameters $r$. Even though we required $r\geq s+2$ in \cref{sec:math_model} for differentiability of the underlying operator $F$, \cref{fig:linearprob} shows satisfactory smoothing already for $r=1$. Hence, in the subsequent tests we always consider $r=1$. In \cref{fig:linearprob}, we further see that without smoothing (top row), the high frequency oscillations remain even for a large regularization parameter. However, for suitably chosen parameters $(\alpha,r)$ as in d) and e), we observe a satisfactory approximation of the true solution.

\begin{figure}[ht!]
    \centering
    \begin{tabular}{c}
    % Answer: [trim={left bottom right top},clip]
    \subfloat[\centering $\alpha=1.2\cdot 10^{-5},\,r=0$]{\includegraphics[width=0.323\textwidth,trim={10 0 30 20},clip]{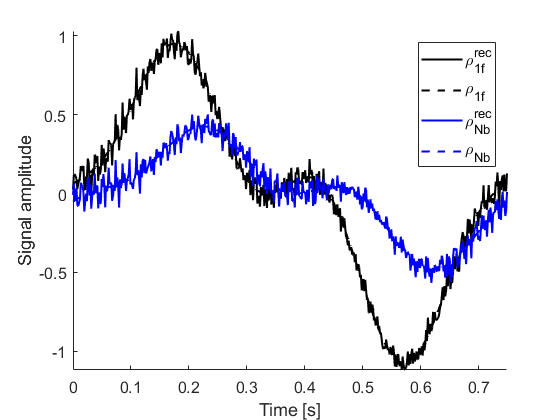}}  \label{fig:smallalphasol}%
    \subfloat[\centering $\alpha=1.2\cdot 10^{-3},\,r=0$]{\includegraphics[width=0.323\textwidth,trim={10 0 30 20},clip]{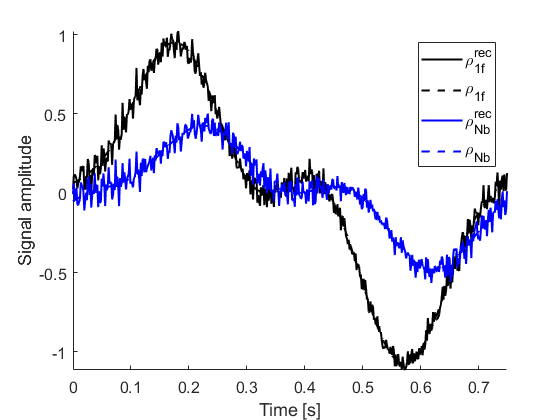}}  \label{fig:optialphasol}%
    \subfloat[\centering $\alpha=1.2\cdot 10^{-1},\,r=0$]{\includegraphics[width=0.323\textwidth,trim={10 0 30 20},clip]{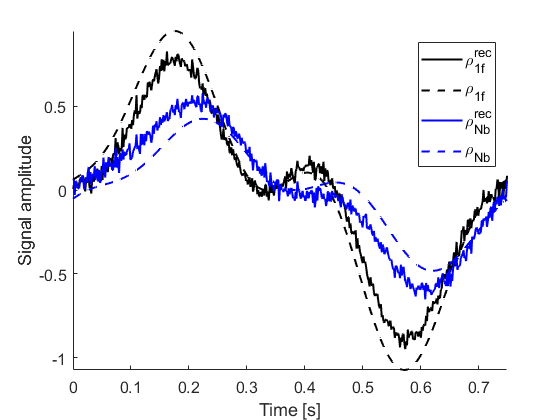}} \label{fig:largealphasol}%    
    \\
    \subfloat[\centering $\alpha=1.2\cdot 10^{-5},\,r=1$]{\includegraphics[width=0.323\textwidth,trim={10 0 30 20},clip]{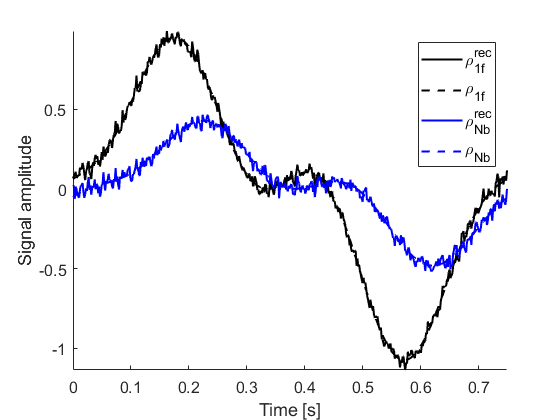}}  \label{fig:nonsmoothed_sol}%
    \subfloat[\centering $\alpha=1.2\cdot 10^{-3},\,r=1$]{\includegraphics[width=0.323\textwidth,trim={10 0 30 20},clip]{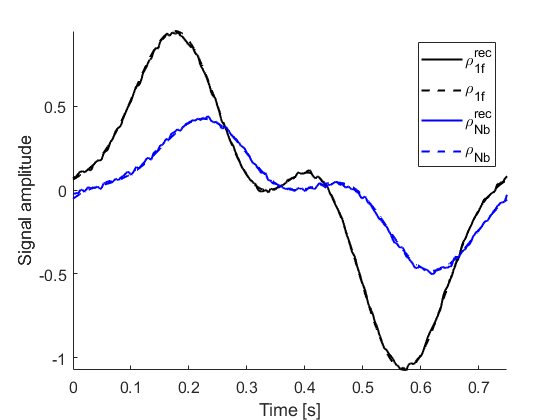}}  \label{fig:smoothed_sol1}%
    \subfloat[\centering $\alpha=1.2\cdot 10^{-1},\,r=1$]{\includegraphics[width=0.323\textwidth,trim={10 0 30 20},clip]{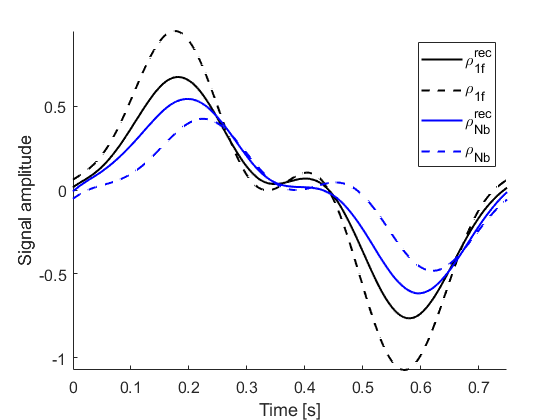}} \label{fig:smoothed_sol2}%  
    \end{tabular}
    \caption{Results of the linTikh method \eqref{linTikh} for varying regularization parameter $\alpha$ and smoothing parameter $r$. The relative noise level $\delta=5\%$ and the number of measurement points $N=2$ are fixed in all graphs.}
    \label{fig:linearprob}
\end{figure}

Next, we present results for the different solution-methods for \cref{prob3}. In the left plot of \cref{fig:impracticalmethods}, the relative residual error of the minTikh \cref{method:minTikh} for $N=2$ and $N=3$ measurement points, with a true PWV $u=5\, \text{m/s},$ minimized over the finite set $U=\{1+9(k-1)/99|k=1,\ldots,100\}$ is shown. This choice of $U$ implies that we look for a PWV in the interval between 1 m/s and 10 m/s. Furthermore, the relative residual error of the direct approach \eqref{eq_directreg} with hard thresholding is computed for each $u\in U$ as well. We see that for the minTikh method for $N=2,$ as well as for the direct approach, there is no unique minimizer of the residual error close to the true PWV. Only for the minTikh method for $N=3$ a minimum is obtained close to the actual PWV. We conclude that for unique solvability of \cref{prob3}, at least $N\geq 3$ is required. \cref{fig:impracticalmethods} also shows the impact of the regularization parameter on the reconstruction quality and reconstructed PWV for the minTikh method (top right plot) and ADM \cref{method:adm} (bottom plots), respectively. In particular for the ADM method, we can observe the importance of a proper choice of regularization parameter and initial value. Note that the dependence on the regularization parameter increases significantly for an unsuitable initial guess (bottom left plot, $u^0=10 \, \text{m/s}$). However, for an initial guess close to the solution (bottom right plot, $u^0=4 \, \text{m/s})$), we obtain a large interval of regularization parameters for which the PWV is reconstructed properly, which also generally holds for the minTikh method. In particular, the PWV-reconstruction is robust with respect to the regularization parameter, which means that in practical situations, the regularization parameter can be tuned manually, such that the reconstructed pulse waves still retain their physical characteristics, while noise-related oscillations are reduced.

\begin{figure}[ht!]
    \centering
    \includegraphics[width=.48\textwidth,trim={20 0 10 10},clip]{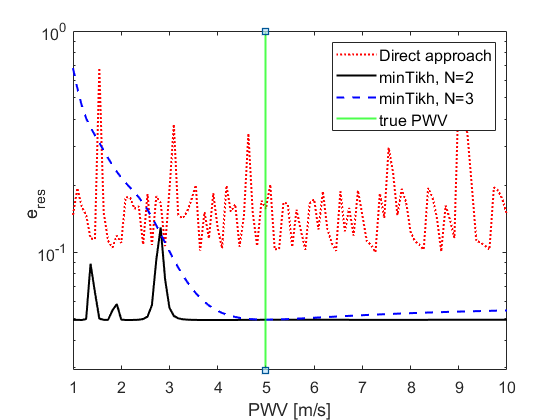}
    \includegraphics[width=.48\textwidth,trim={20 0 10 10},clip]{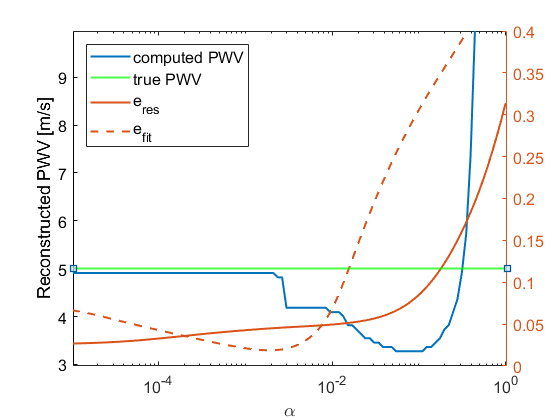}
    \\
    \includegraphics[width=.48\textwidth,trim={20 0 10 10},clip]{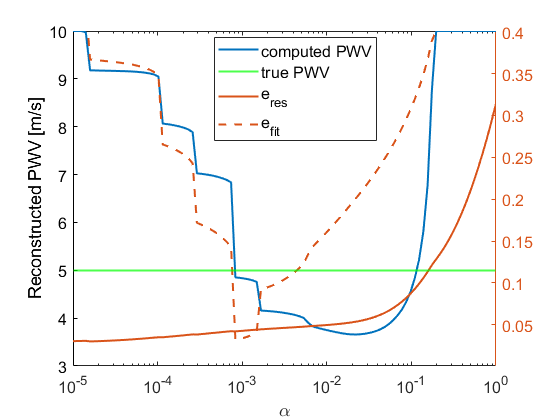}
    \includegraphics[width=.48\textwidth,trim={20 0 10 10},clip]{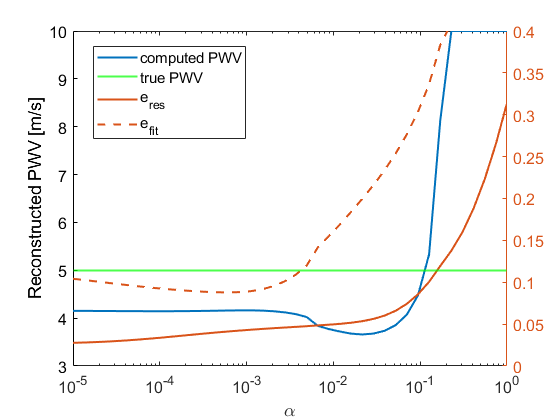}
    \caption{Quality measure $e_{res}$ for the minTikh method \eqref{minTikh} with varying number of measurement points, compared to the direct regularization approach with hard thresholding \eqref{eq_direct},\eqref{eq_directreg} (top, left). Influence of the regularization parameter on reconstruction quality and reconstructed PWV for the minTikh method \eqref{minTikh} (top, right) and ADM method \eqref{adm} with initial value $u^0=10 \, \text{m/s}$ (bottom, left) or $u^0=4\, \text{m/s}$ (bottom, right). All experiments have a relative noise level of $\delta=5\%$, and all but the top left have a fixed number of $N=3$ measurement points.}
    \label{fig:impracticalmethods}
\end{figure}

In \cref{fig:minTikh0}, reconstruction results for the minTikh \cref{method:minTikh} in the exact (noise-free) case are depicted, using $\alpha=10^{-5}$ as a-priori regularization parameter. The method is able to reconstruct the pulse wave forms and the actual PWV precisely, such that the dashed and full lines in the plot actually overlap, making the dashed line hardly visible. For the experiments with $5\%$ noise level, the regularization parameter is chosen as $\alpha=10^{-3}$. Here, we find that for the true PWV $u=2 \,\text{m/s}$ a clear minimum of the residual error is obtained, and the actual PWV is reconstructed almost accurately. However, in the case $u=8 \,\text{m/s}$ the reconstructed PWV $u$ deviates much more from the actual solution, and the difference of magnitude in the relative residual error for different PWVs is visually hardly noticeable for large PWVs. This effect occurs because the difference between the measurement waves in several points is only very small for large PWVs, which can be seen in the right column of \cref{fig:minTikh0.05}. 
One remedy for this situation would be to increase the lengths $L_k$ of the considered vessel segment, since this would increase the corresponding delays $\tau_k=L_k/u$ in Equation~\eqref{fundamental_time} in an inversely proportional manner to a decrease in the PWV $u$. We also deduce that the addition of more measured points (in between the first and last one) only leads to a minor improvement, even though it increases the available data points and computational costs significantly. Results for the ADM \cref{method:adm} are depicted in \cref{fig:ADM0,fig:ADM0.05}, for analogous experiments as for the minTikh method. 
In particular, we observe convergence to a specific PWV after a sufficient number of outer iterations. However, due to potential local minima, the computed PWV may not actually be the true one, underlining the need of a proper initial guess. 
Since we observe a converging behaviour for the calculated PWV $u$, we implement a stopping rule which terminates if no relevant changes appear for a certain amount of iterations, or more precisely, the stopping index $k^\ast\geq10$ is chosen as the first index $k$ such that 
    \begin{equation*}
        \sum_{j=1}^{10}|u^{k-j+1}-u^{k-j}|<10^{-3} \,.
    \end{equation*}
The inner iteration is stopped analogously, but with a different threshold of $10^{-4}$.

In Figure~\ref{fig:noise_dependence}, error values and reconstructed PWVs for optimal regularization parameters are depicted in dependence on the noise level. Again, we observe that in particular the minTikh method is able to closely reconstruct the true PWV even for very high noise levels. In contrast, reconstruction accuracy for the ADM method decreases already for 2\% noise.

All visual findings are supported by the error values in \cref{table:errortable}, where also the number of linTikh evaluations is shown. This value is an indicator for the efficiency and computational cost of the method, since the application of the linTikh \cref{method:linTikh} is the most computational expensive step in both the minTikh and ADM method, respectively. For minTikh, the number of linTikh evaluations is equal to $|U|$, while in the ADM method, it is equal to the number of total outer iterations $k^\ast$ as in the description of \cref{method:adm}. The computational cost in both methods can be regulated by adjusting $|U|$ and the stepsize $\omega_k$ in \eqref{landweber}, respectively. 

In summary, both proposed methods minTikh and ADM solve pulse wave splitting and PWV estimation very well in the simulated data setting, specifically for lower PWVs. The minTikh method has the advantage that no initial guess for the PWV is required, and that the PWV is accurately reconstructed for a very large range of regularization parameters.
In contrast, the ADM method is strongly dependent on the initial guess for the PWV and is less reliable with respect to the choice of the regularization parameter. Therefore, we generally recommend the minTikh method for pulse wave velocity estimation. 

\begin{figure}[ht!]
    \centering
    \begin{tabular}{c}
        % Answer: [trim={left bottom right top},clip]
    \includegraphics[width=0.318\textwidth,trim={5 0 30 20},clip]{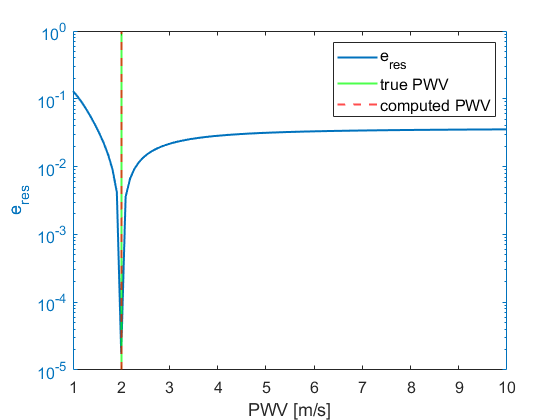} %
    \includegraphics[width=0.318\textwidth,trim={10 0 30 20},clip]{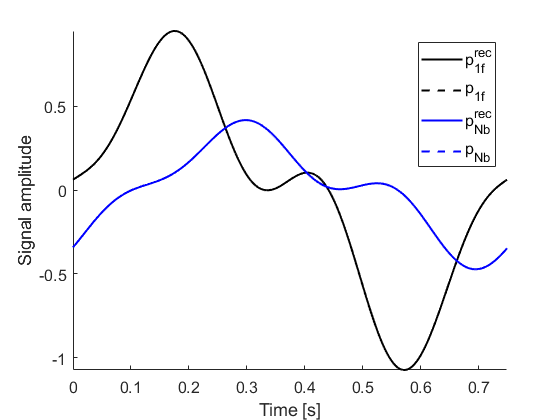} % 
    \includegraphics[width=0.318\textwidth,trim={10 0 30 20},clip]{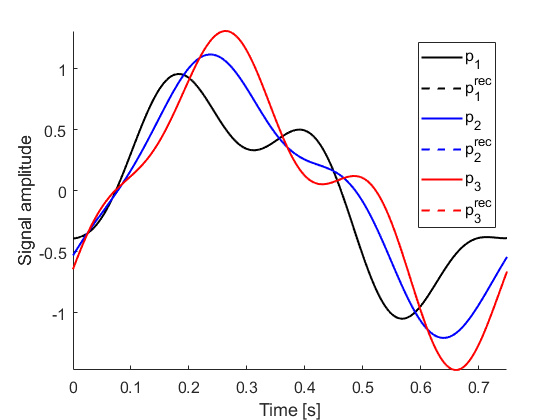}
    \\
    \includegraphics[width=0.318\textwidth,trim={5 0 30 20},clip]{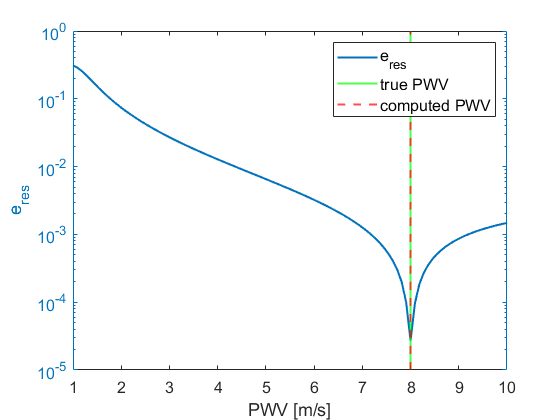}
    \includegraphics[width=0.318\textwidth,trim={10 0 30 20},clip]{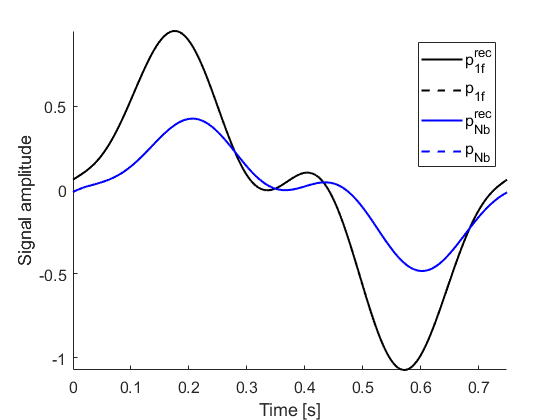}
    \includegraphics[width=0.318\textwidth,trim={10 0 30 20},clip]{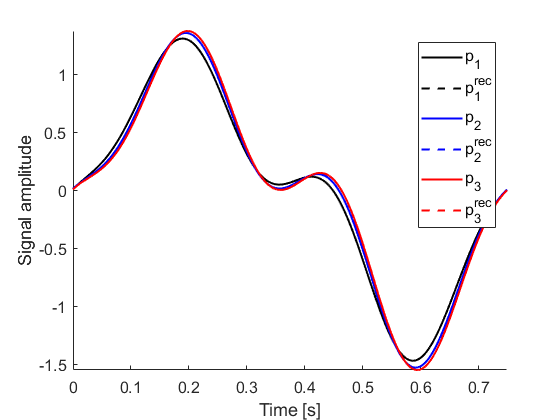}
    \end{tabular}
    \caption{Results for the minTikh \cref{minTikh} without noise for true velocities $u^\ast=2$ (top) and $u^\ast=8$ (bottom): Relative residual error for every $u\in U$ (left), the reconstructed wave forms $(p^{\text{rec}}_{1f},p^{\text{rec}}_{Nb})=\mathcal{F}^{-1}(x_{1,\alpha},x_{2,\alpha})$ compared to actual wave forms $(p_{1f},p_{Nb})$ (middle) and the reconstructed data waves $\boldsymbol{p}^{\text{rec}}=\mathcal{F}^{-1}F(\boldsymbol{x}_\alpha,u_\alpha)$ compared to the simulated data $\boldsymbol{p}$ (right).}
    \label{fig:minTikh0}
\end{figure}

\begin{figure}[ht!]
    \centering
    \begin{tabular}{c}
    \includegraphics[width=0.318\textwidth,trim={5 0 30 20},clip]{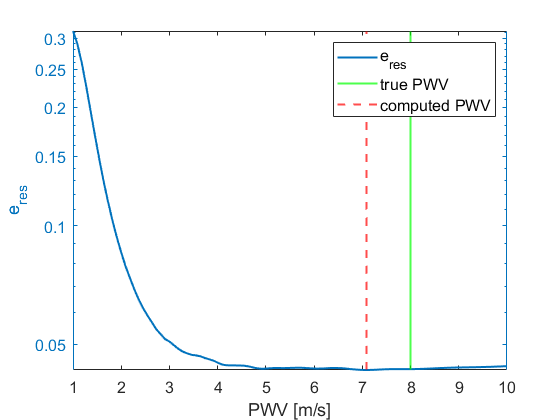} %
    \includegraphics[width=0.318\textwidth,trim={10 0 30 20},clip]{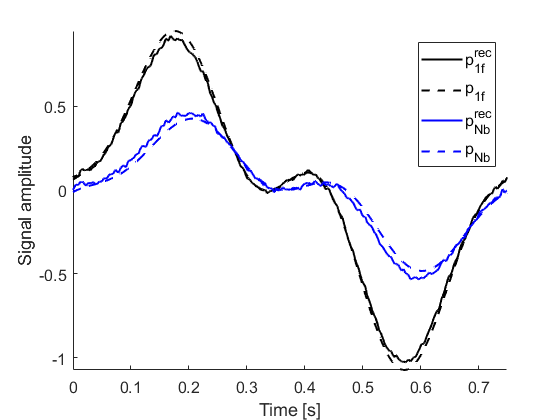} % 
    \includegraphics[width=0.318\textwidth,trim={10 0 30 20},clip]{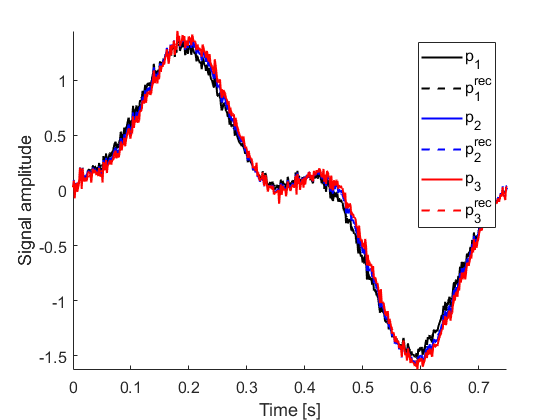}  
    \\
    \includegraphics[width=0.318\textwidth,trim={5 0 30 20},clip]{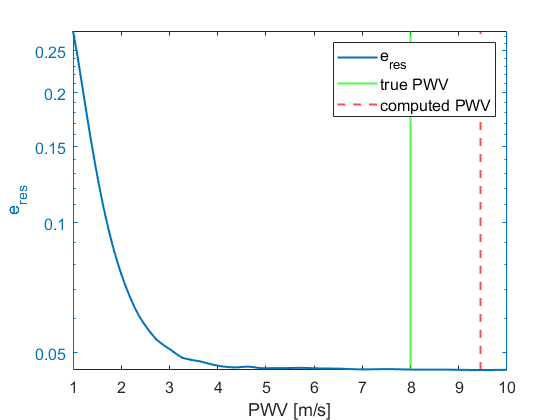} 
    \includegraphics[width=0.318\textwidth,trim={10 0 30 20},clip]{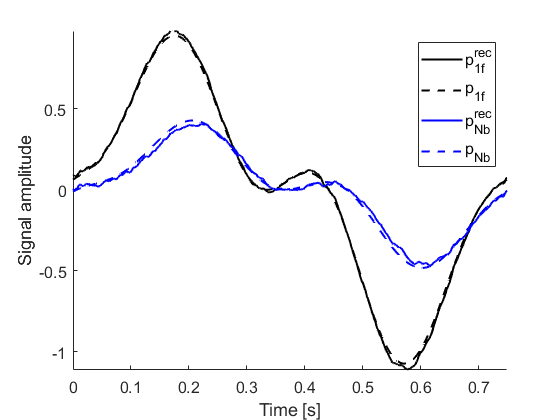}  
    \includegraphics[width=0.318\textwidth,trim={10 0 30 20},clip]{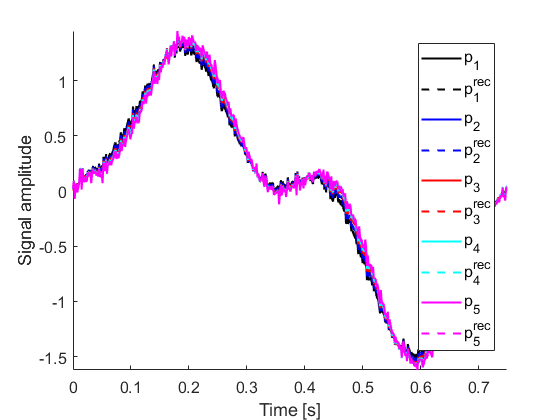}  
    \end{tabular}
    \caption{Results for the minTikh \cref{method:minTikh} with relative noise level $\delta=5\%$ and $u^\ast=8$, for $N=3$ (top) and $N=5$ (bottom): Relative residual error for every $u\in U$ (left), the reconstructed wave forms $(p^{rec}_{1f},p^{rec}_{Nb})=\mathcal{F}^{-1}(x_{1,\alpha}^\delta,x_{2,\alpha}^\delta)$ compared to actual wave forms $(p_{1f},p_{Nb})$ (middle) and the reconstructed data waves $\boldsymbol{p}^{\text{rec}}=\mathcal{F}^{-1}F(\boldsymbol{x}_\alpha^\delta,u_\alpha^\delta)$ compared to the simulated data $\boldsymbol{p}$ (right).}
    \label{fig:minTikh0.05}
\end{figure}

\begin{figure}[ht!]
    \centering
    \begin{tabular}{c}
    \includegraphics[width=0.318\textwidth,trim={10 0 30 20},clip]{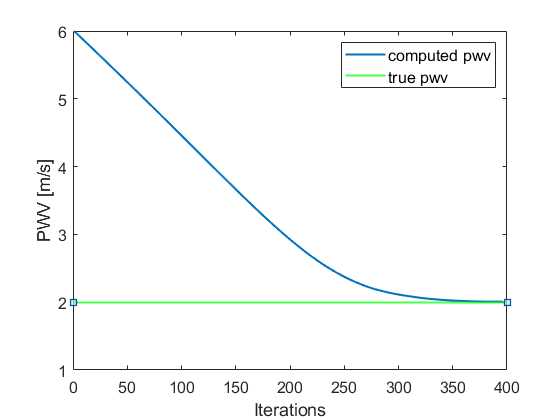} %
    \includegraphics[width=0.318\textwidth,trim={10 0 30 20},clip]{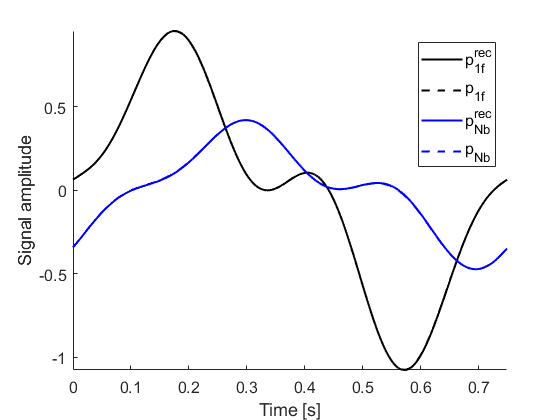} % 
    \includegraphics[width=0.318\textwidth,trim={10 0 30 20},clip]{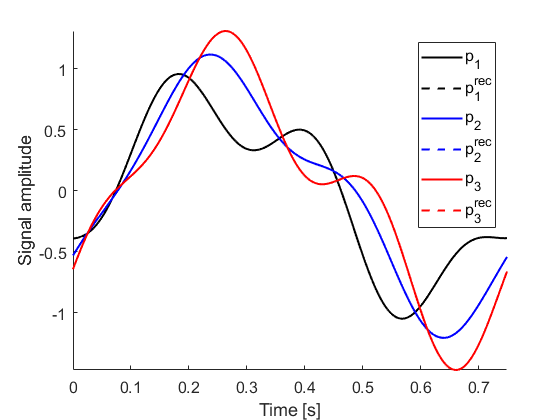}  
    \\
    \includegraphics[width=0.318\textwidth,trim={10 0 30 20},clip]{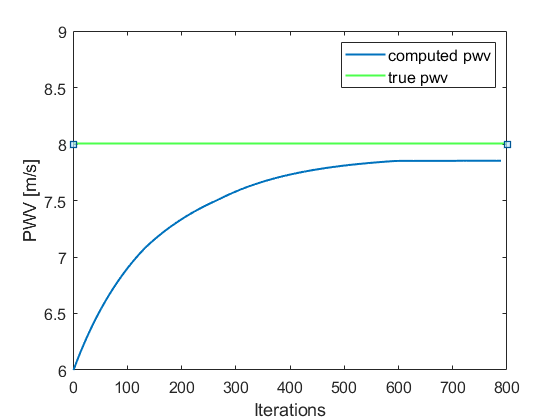} 
    \includegraphics[width=0.318\textwidth,trim={10 0 30 20},clip]{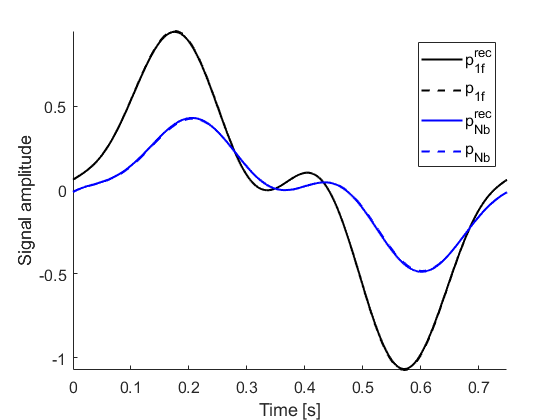}  
    \includegraphics[width=0.318\textwidth,trim={10 0 30 20},clip]{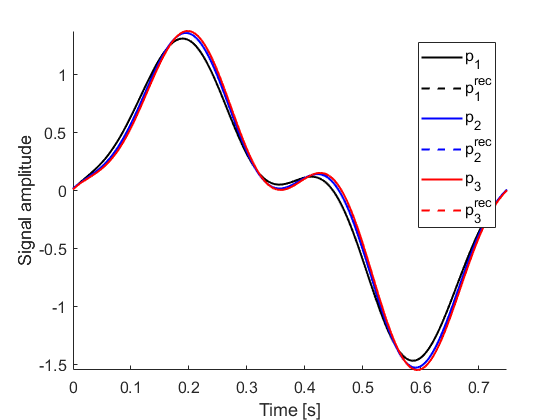}  
    \end{tabular}
    \caption{Results for the ADM Method~\eqref{method:adm} without noise for true velocities $u=2$ (top) and $u=8$ (bottom): The reconstructed PWV in dependence of the outer iteration number (left), the reconstructed wave forms $(p^{rec}_{1f},p^{rec}_{Nb})=\mathcal{F}^{-1}(x_1^{k^\ast},x_2^{k^\ast})$ compared to actual wave forms $(p_{1f},p_{Nb})$ (middle) and the reconstructed data waves $\boldsymbol{p}^{\text{rec}}=\mathcal{F}^{-1}F(\boldsymbol{x}^{k^\ast},u^{k^\ast})$ compared to the simulated data $\boldsymbol{p}$ (right).}
    \label{fig:ADM0}
\end{figure}

\begin{figure}[ht!]
    \centering
    \begin{tabular}{c}
    \includegraphics[width=0.318\textwidth,trim={10 0 30 20},clip]{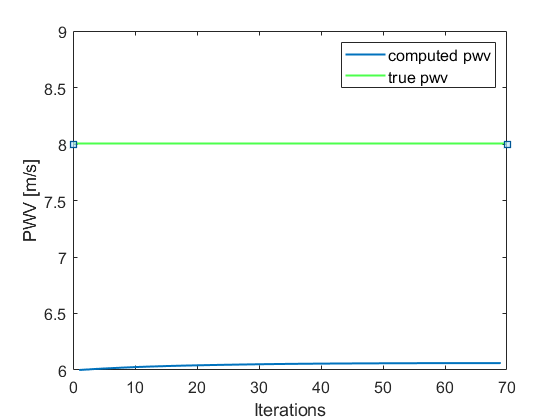} %
    \includegraphics[width=0.318\textwidth,trim={10 0 30 20},clip]{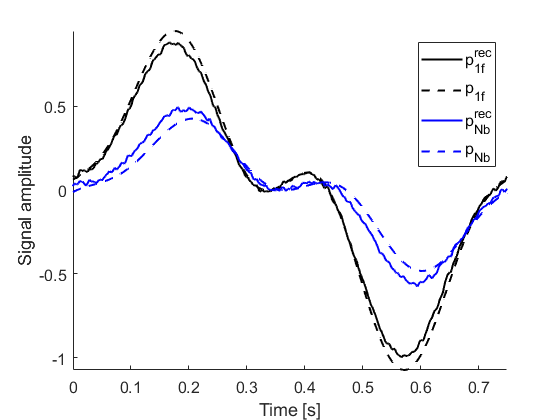} % 
    \includegraphics[width=0.318\textwidth,trim={10 0 30 20},clip]{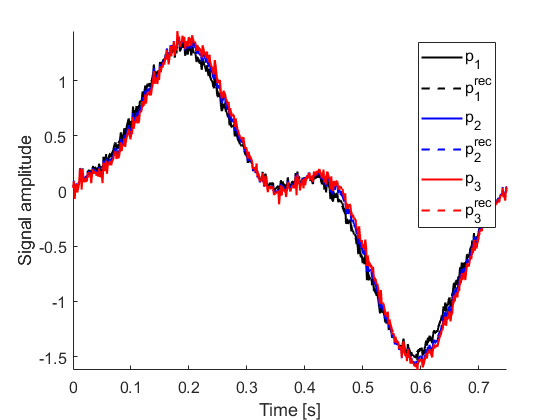}  
    \\
    \includegraphics[width=0.318\textwidth,trim={10 0 30 20},clip]{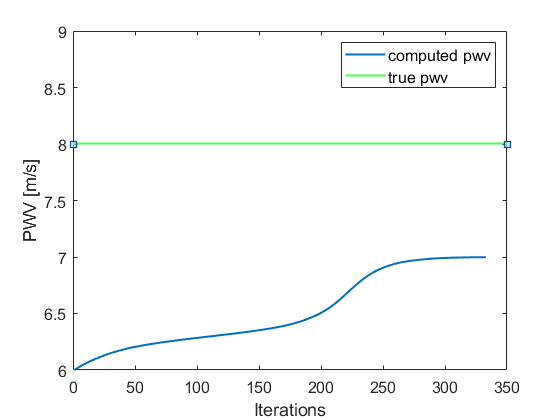} 
    \includegraphics[width=0.318\textwidth,trim={10 0 30 20},clip]{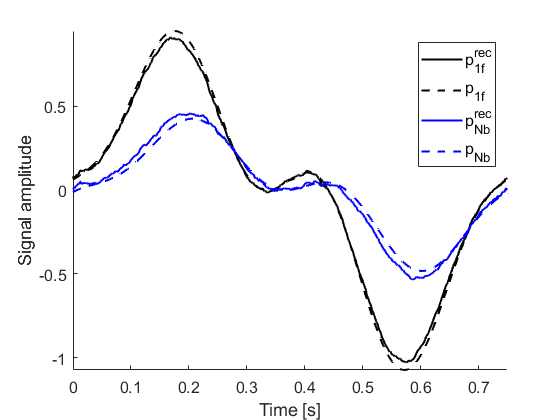}  
    \includegraphics[width=0.318\textwidth,trim={10 0 30 20},clip]{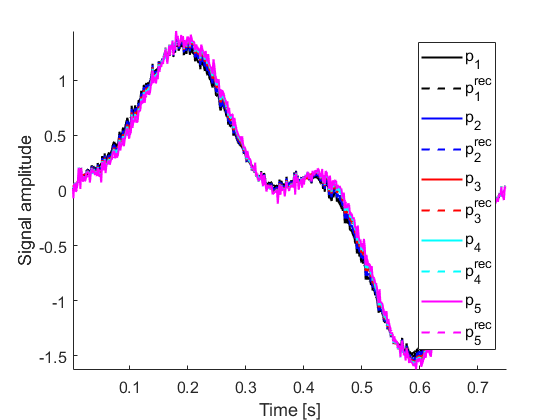}  
    \end{tabular}
    \caption{Results for the ADM \cref{method:adm} with relative noise level $\delta=5\%$ for $N=3$ (top) and $N=5$ (bottom): The reconstructed PWV in dependence of the outer iteration number (left), the reconstructed wave forms $(p^{rec}_{1f},p^{rec}_{Nb})=\mathcal{F}^{-1}(x_1^{k^\ast},x_2^{k^\ast})$ compared to actual wave forms $(p_{1f},p_{Nb})$ (middle) and the reconstructed data waves $\boldsymbol{p}^{\text{rec}}=\mathcal{F}^{-1}F(\boldsymbol{x}^{k^\ast},u^{k^\ast})$ compared to the simulated data $\boldsymbol{p}$ (right).}
    \label{fig:ADM0.05}
\end{figure}

\begin{figure}[ht!]
    \centering
    \includegraphics[width=.48\textwidth,trim={20 0 0 10},clip]{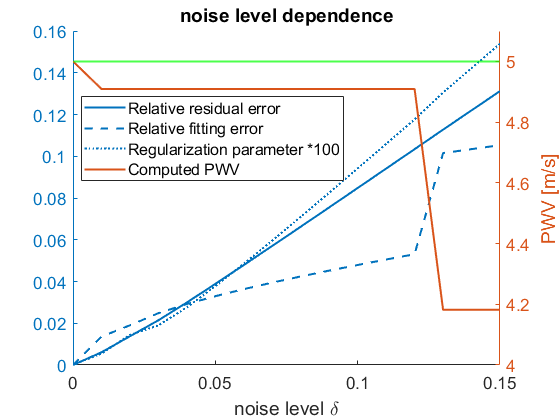}
    % Answer: [trim={left bottom right top},clip]
    \includegraphics[width=.48\textwidth,trim={20 0 0 10},clip]{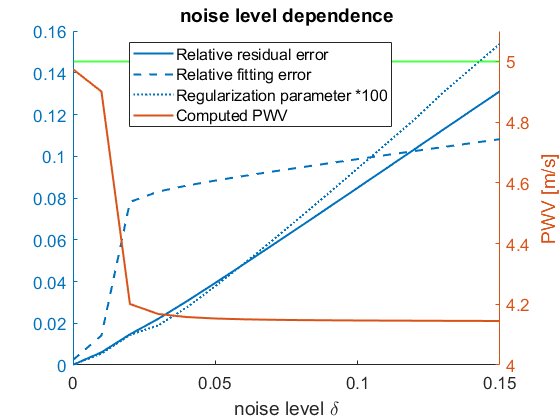}
    \caption{Impact of noise level onto reconstruction quality for the minTikh method (left) and ADM method with initial guess $u^0=4\, \text{m/s}$ (right) for $N=3.$ The green line depicts the true PWV.}
    \label{fig:noise_dependence}
\end{figure}

\begin{table}[ht!]
    \centering
    \caption{Reconstruction results in different settings. The row ``linTikh eval.'' indicates the number of evaluations of the linTikh \cref{method:linTikh} in the corresponding experiments.}
    \scalebox{0.85}{
    \begin{tabular}{|l|l|l|l|l|l|l|l|l|l|l|}
    \cline {4-11}
    \multicolumn{3}{c|}{} & \multicolumn{4}{c|}{exact data} &   \multicolumn{4}{c|}{5\% noise level} \\ \hline
    Method                   & $N$ & $u^\ast $ & $u$ & $e_{res}$ & $e_{fit}$ & linTikh eval.\  & $u$ & $e_{res}$ & $e_{fit}$ & linTikh eval.\  \\ \hline \hline
    \multirow{4}{*}{minTikh} & 3 & 2 &  2  &   0.000017     &  0.000055 & 100  & 1.91   &   0.043     &   0.041  & 100   \\ \cline{2-11}
                            & 5 & 2 &  2  &  0.000011      &   0.000026 & 100 & 1.91   &   0.045     &   0.039  & 100   \\ \cline{2-11}
                            & 3 & 8 &  8  &   0.000026     &    0.00028 & 100 &  7.09  &    0.042    &    0.067 & 100   \\ \cline{2-11}
                            & 5 & 8 & 8   &   0.000018     &   0.00022  & 100 & 9.45   &   0.045     &   0.043 & 100   \\ \hline \hline
    \multirow{4}{*}{ADM}    & 3 & 2 &  2.01  &   0.00036     &   0.0030 & 396   &  1.95  &    0.043    & 0.029 & 513 \\ \cline{2-11}
                            & 5 & 2 &  2.01  &    0.00039    &    0.0038 & 445   &  1.95  &   0.045     & 0.025 & 605 \\ \cline{2-11}
                            & 3 & 8 &   7.85  &  0.00015      & 0.007 & 789  & 6.06  &   0.043     &   0.11  & 69   \\ \cline{2-11}
                            & 5 & 8 & 7.86   &  0.00013      &  0.0071  & 948    &  7  &   0.045     & 0.067 & 333 \\ \hline
    \end{tabular}}
    
    \label{table:errortable}
\end{table}

To showcase the applicability to experimental data, the minTikh method was tested on an MRI dataset obtained by a GE Discovery\texttrademark MR750 3.0T MRI scanner, using phase contrast imaging \cite{Wymer_2020} of the internal carotid artery (with point 1 on the common carotid). Here, a 4-dimensional MR data cube is imaged, describing the time-dependent pulse waveforms in each spatial point of the specified field of view. In this case, $255\times255\times5$ spatial points with dimensions $0.7\text{mm} \times 0.7\text{mm} \times 4\text{mm}$, for a total of 102 time steps in one cardiac cycle were imaged. Further, a 3D Time-of-flight (TOF) angiogram is used to track arteries in the 3D image, which is then in turn used to choose data points for the pulse wave analysis presented in this manuscript, as well as for distance calculation between these points. 
% Artery tracking has been performed via segmentation of the 3D TOF and subsequent skelletonization \cite{}. For distance computation a Bezier-curve approximation via De Casteljau's algorithm has been implemented. 
Figure~\ref{fig:reala} shows an angiogram of the internal carotids next to the available flow data, where voxels with high flow (i.e., voxels which cover blood vessels) are highlighted in red, the tracked artery is shown as a blue line, and the three data points used for pulse wave analysis are highlighted. In Figure~\ref{fig:realc}, we see the corresponding pulse wave data ($L^1$-normalized to account for \emph{partial voluming}, cf. \cite{Roll_Colchester_Summers_Griffin_1994}), while Figure~\ref{fig:realb} shows the calculated PWV $u_{rec}=7.3\, \text{m/s}$ (vertical green line) and relative residual error in dependence on $u\in U$. Finally, Figure~\ref{fig:reald} shows the calculated split waves, their sum and the measured data in data point $1$. The minTikh method was applied with $N=3$ data points with a total distance of $L_3=14cm$. The parameters in use were manually tuned to $\alpha=0.0001$ and $s=1$. Backed by recent results on the PWV in the carotid artery \cite{Tang_Lee_Chuang_Huang_2020, Darwich_2015}, the results by the minTikh method appear to be realistic for the considered test subject.

\begin{figure}[ht!]
    \centering
    \begin{tabular}{c}
    \subfloat[\centering Visualization of the internal carotid arteries]{{\includegraphics[width=0.47\textwidth,trim={0 0 0 0},clip]{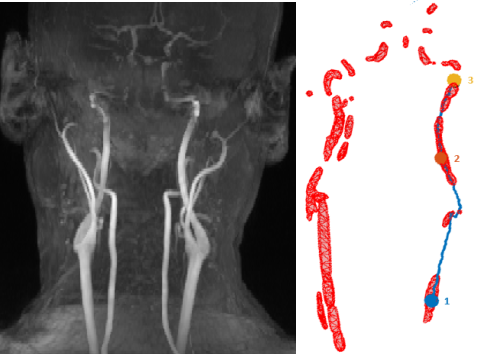} } \label{fig:reala}}%
    \subfloat[\centering reconstructed PWV in m/s and error values]{{\includegraphics[width=0.47\textwidth,trim={0 0 0 0},clip]{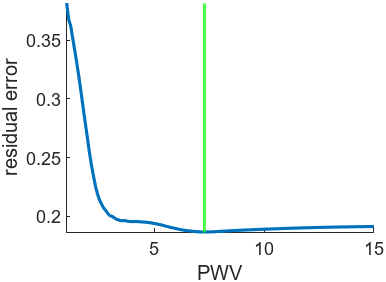} } \label{fig:realb}}%
    \\
    \subfloat[\centering pulse wave data]{{\includegraphics[width=0.47\textwidth,trim={0 0 0 0},clip]{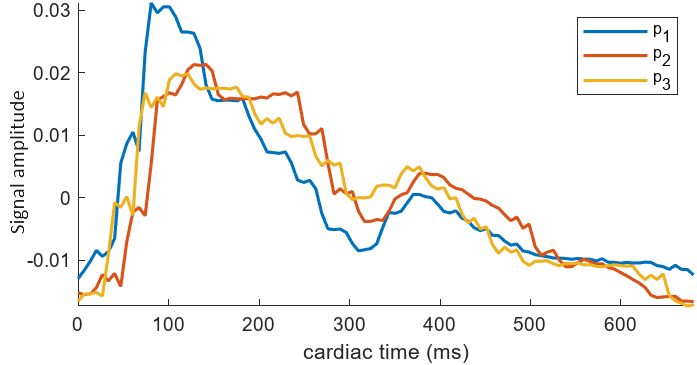} } \label{fig:realc}}%
    \subfloat[\centering resulting split waves in data point $1$]{{\includegraphics[width=0.47\textwidth,trim={0 0 0 0},clip]{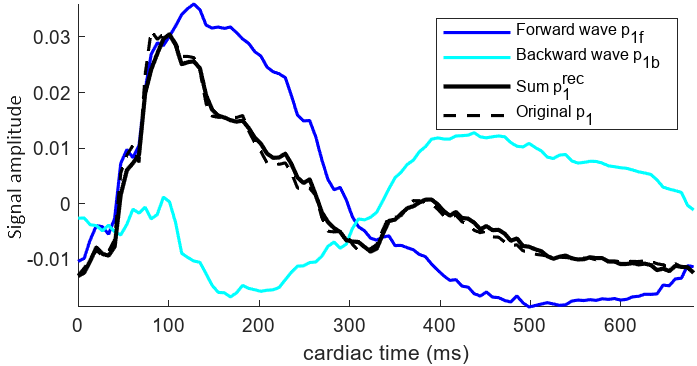} } \label{fig:reald}}% 
    \end{tabular}
    \caption{Real data experiment for the minTikh method.}
    \label{fig:real_ex}
\end{figure}

% % % % % % % % % % % % % % % % % % %
% Section - Conclusion and Outlook  %
% % % % % % % % % % % % % % % % % % %
\section{Conclusion and Outlook}\label{sec:conclusion}

In this paper, we developed a mathematical framework for the problem of pulse wave splitting and PWV estimation in the human brain arising in medical applications. We developed two methods to solve the underlying nonlinear and ill-conditioned inverse problem, namely minimizing a linear Tikhonov functional (minTikh) and an alternate direction method (ADM). The algorithms were tested numerically in a simulation environment which aimed to recreate realistic conditions. The results are promising, and show potential for further improvement in less general settings.

In future work, we aim for the proposed methods to be applied to MRI data. However, the results concerning larger pulse wave velocities are an indicator of the ill-conditioned nature of the problem in near real-world conditions. To produce clearer results in this case, either a higher data resolution or a larger spacial difference between data points is required. Note that in the real-world case both these values are restricted due to the resolution of MRI data and the length of the main cerebral arteries in the human brain. Note that the mathematical formulation works completely without a-priori knowledge of the corresponding pulse waves, except a smoothness assumption. Possible real-world knowledge could include that the backward wave is a superposition of the forward wave attenuated at several reflection points. This a-priori information about the pulse waves can be incorporated in the regularization scheme for the application of our methods for real data. Another extension of our model could include the description of pulse waves in larger brain regions, including branches.

In summary, we hope that together with future investigations of required measurement quality and increasing temporal resolution of the data, our proposed method will contribute towards the development of a gold standard for successful PWV estimation.
\appendix
\section{Proof of \cref{Lemma 2.7}}\label{appendixA}

Note that for the subsequent proof, we make use of the Landau notation, i.e.,
we say that $F=\mathcal{O}(g(\|\boldsymbol{h}\|)), $ as $\|\boldsymbol{h}\|\to 0$ for $g:\R\to\R$ if there exists a constant $c>0$ such that $\|F(\boldsymbol{h})\|\leq c g(\|\boldsymbol{h}\|)$ for all $\|\boldsymbol{h}\|$ small enough. 
\begin{proof}%[Proof of \cref{Lemma 2.7}]
In the first step, we compute the Frech\'et derivative of the operator $A$. This means that we want to find a bounded linear operator $A'(u)$ (cf.~\cite{Deimling_2013}), for which there holds
    \begin{equation}\label{derivative_def}
       A(u+h)-A(u)=A'(u)h+R(u,h)
        \,,
    \end{equation}
with $R(u,{h})=\mathcal{O}\left( |h|^2\right).$
For $A'(u)$ as defined in \eqref{A_derivative}, linearity follows directly from its definition. 
Next, in order to show that $A'(u)$ is bounded and thus continuous, recall that
    \begin{equation*}
        \|A'(u) h\|^2_{\mathcal{Y}_r\to \mathcal{Y}_s} = \sup_{\boldsymbol{y}\in \mathcal{Y}_r}\frac{\|A'(u) h \boldsymbol{y}\|_{\mathcal{Y}_s}^2}{\|\boldsymbol{y}\|^2_{\mathcal{Y}_r}} \,.
    \end{equation*}
%Hence, it suffices to show that each component operator $A_k(\boldsymbol{y},u)$ is bounded. 
To show that this operator norm is finite, consider the estimate
    \begin{equation*}
    \begin{aligned}
        \|A'(u) h \boldsymbol{y}\|^2_{\mathcal{Y}_s}
        &\overset{\phantom{\eqref{whys1}}}{=}
        \sum_{k=1}^N\int_\R(1+|\omega|^2)^s\underbrace{\left|e_k(\omega,u)\right|^2}_{=1} \left|\frac{i\omega L_k h}{u^2}y_k(\omega)\right|^2 \,\mathrm{d}\omega
        \\&\overset{\phantom{\eqref{whys1}}}{\leq}
        \sum_{k=1}^N\left|\frac{L_k h}{u^2}\right |^2\|y_k(\omega)\omega\|^2_{L_{s}^2(\R)}
        \\ &\overset{\eqref{whys1}}{\leq}
        \left|\frac{L_N h}{u^2}\right |^2\|\boldsymbol{y}\|^2_{\mathcal{Y}_{s+1}},
    \end{aligned}    
    \end{equation*}
which yields boundedness of $A'(u)$ via $\|\boldsymbol{y}\|_{\mathcal{Y}_{s+1}}\leq\|\boldsymbol{y}\|_{\mathcal{Y}_r}$ and
    \begin{equation*}
    \begin{aligned}
        \|A'(u) h\|^2_{\mathcal{Y}_r\to \mathcal{Y}_s}&
        \leq
         \sup_{\boldsymbol{y}\in \mathcal{Y}_r}\frac{\left|\frac{L_N h}{u^2}\right |^2\|\boldsymbol{y}\|^2_{\mathcal{Y}_{s+1}}}{\|\boldsymbol{y}\|^2_{\mathcal{Y}_r}}
       \leq\left|\frac{L_N}{u^2} h\right|^2
        \,.
    \end{aligned}
    \end{equation*}
%Since $h_1,h_2,y_1,y_2\in L^2_1(\R)\supset L^2_s(\R)$ for $s\geq 2$ the boundedness of $A(\boldsymbol{y},u)$ now follows.
Hence, it remains to show that $A'(u)$ satisfies \eqref{derivative_def} with $R(u,h)=\mathcal{O}(|h|^2).$
For this, we calculate
    \begin{equation*}
    \begin{aligned}
        R(u,h)\boldsymbol{y}(\omega)&=(A(u+h)-A(u)-A'(u) h)\boldsymbol{y}(\omega)
        \\&=
        \,\mathrm{diag}\left(\left(e^{-i\omega \frac{L_k}{u+h}}
        - e^{-i\omega \frac{L_k}{u}}
        -e^{-i\omega\frac{L_k}{u}}\left(\frac{i\omega L_k h}{u^2}\right)  \right)_{k=1}^N\right)\boldsymbol{y}(\omega)
        \\&=\mathrm{diag}\left(\left(e^{-i\omega \frac{L_k}{u+h}}
        - e^{-i\omega \frac{L_k}{u}}
        \left(1+\frac{i\omega L_k h}{u^2}\right)\right)_{k=1}^N\right)\boldsymbol{y}(\omega) \,,  
        \end{aligned}
    \end{equation*}
from which it follows that
    \begin{equation}\label{derivative_norm}
    \begin{aligned}
        \|R(u,h)&\|^2_{\mathcal{Y}_r\to \mathcal{Y}_s}
        \\&=\sup_{\boldsymbol{y}\in \mathcal{Y}_r}\frac{\|R(u,h) \boldsymbol{y}\|^2_{\mathcal{Y}_s}}{\|\boldsymbol{y}\|^2_{\mathcal{Y}_r}}
        \\&=\sup_{\mathcal{Y}_r}\frac{\sum_{k=1}^N\left\|\left(e^{-i\omega \frac{L_k}{u+h}}
        - e^{-i\omega \frac{L_k}{u}}
        \left(1+\frac{i\omega L_k h}{u^2}\right)\right) y_k\right\|^2_{L^2_s(\R)}}{\|\boldsymbol{y}\|^2_{\mathcal{Y}_r}}.
    \end{aligned}
    \end{equation}
Now, define $f(u):=e^{-i\omega L_k/u}$ for all $u\geq\varepsilon>0$ and note that
    \begin{equation}\label{eq_f_derivatives}
        f'(u)=f(u)\frac{i\omega L_k}{u^2} \qquad \text{and} \qquad f''(u)=f(u)\left(\frac{L_k^2\omega^2+2iu L_k\omega}{u^4}\right)\,.
    \end{equation}
Using this, we can now estimate
    \begin{equation}\label{derivative_limit2}
    \begin{aligned}
        &\left\|y_k\left(e^{-i\omega \frac{L_k}{u+h}}-e^{-i\omega \frac{L_k}{u}}\left(1+\frac{i\omega L_k h}{u^2}\right) \right) \right\|_{L_s^2(\R)}
        \\ &\qquad=
       |h|\left\|y_k\left(\frac{e^{-i\omega \frac{L_k}{u+h}}-e^{-i\omega \frac{L_k}{u}}}{h}-e^{-i\omega \frac{L_k}{u}}\frac{i\omega L_k}{u^2}\right)\right\|_{L_s^2(\R)}
        \\ &\qquad=
       |h|\left\|y_k\left(\frac{f(u+h)-f(u)}{h}-f'(u)\right)\right\|_{L_s^2(\R)} \,.
    \end{aligned}
    \end{equation}
Due to \eqref{eq_f_derivatives} $f$ is twice continuously differentiable for $u \geq \eps > 0$. Thus, Taylor's theorem yields $f(u+h)=f(u)+hf'(u)+\frac{h^2}{2}f''(\xi(h))$ for some $\xi = \xi(h)\in [u,u+h]$. Together with $\abs{f(\xi(h))}=1$ it thus follows that
    \begin{equation}\label{der_proof3}
    \begin{aligned}
        |h|&\left\|y_k\left(\frac{f(u+h)-f(u)}{h}-f'(u)\right)\right\|_{L_s^2(\R)}
        \\&\overset{\phantom{\eqref{whys1}}}{=}
        |h|^2\left\|\frac{y_k}{2}f''(\xi(h))\right\|_{L_s^2(\R)}
        \\&\overset{\phantom{\eqref{whys1}}}{=}
        |h|^2\left\|\frac{y_k}{2}f(\xi(h))\left(\frac{L_k^2\omega^2+2i\xi(h) L_k\omega}{\xi(h)^4}\right)\right\|_{L_s^2(\R)}
        \\&\overset{\phantom{\eqref{whys1}}}{\leq}
        |h|^2\left(\frac{L_k^2}{2\xi(h)^4}\left\|y_k(\omega)\omega^2\right\|_{L_s^2(\R)}
        +\frac{L_k}{\xi(h)^3}\left\|y_k(\omega)\omega\right\|_{L_s^2(\R)}\right)
        \\&\overset{\eqref{whys1}}{\leq} |h|^2\left(\frac{L_k^2}{2u^4}
        +\frac{L_k}{u^3}\right)\left\|y_k\right\|_{L_{s+2}^2(\R)} \,,
    \end{aligned}
    \end{equation}
which holds for all $y\in L^2_{r}(\R)$.
Combining \eqref{derivative_norm}, \eqref{derivative_limit2} and \eqref{der_proof3} we obtain
\begin{equation*}
\begin{aligned}
        \|R(u,h)\|_{\mathcal{Y}_r\to \mathcal{Y}_s}^2
    &\leq\sup_{y\in \mathcal{Y}_r}\frac{\sum_{k=1}^N |h|^4\left(\frac{L_k^2}{2u^4}+\frac{L_k}{u^3}\right)^2\left\|y_k\right\|^2_{L_{s+2}^2(\R)} }{\|\boldsymbol{y}\|^2_{\mathcal{Y}_r}}
        \\&\leq
        |h|^4\left(\frac{L_N^2}{2u^4}+\frac{L_N}{u^3}\right)^2,
\end{aligned}
\end{equation*}
which implies $R(u,h)=\mathcal{O}(|h|^2)$. This now yields that $A'(u)h$ as in \eqref{A_derivative} is indeed the Fr\'echet derivative of $A$. The Frech\'et derivative of $B$ follows analogously, which concludes the proof.
\end{proof}

% % % % % % % % % % %
% Section - Support %
% % % % % % % % % % %
\section*{Acknowledgement}

This research was funded in part by the Austrian Science Fund (FWF) SFB 10.55776/F68 ``Tomography Across the Scales'', project F6805-N36 (Tomography in Astronomy). For open access purposes, the authors have applied a CC BY public copyright license to any author-accepted manuscript version arising from this submission. LW is partially supported by the State of Upper Austria. HUV is partially funded by NIH grant \# 5 R21 EY027568 02. This research cites an algorithm described in a patent application by the Center for Technology Licensing at Cornell University (CTL), of which one of the authors (HUV) is the inventor.

% % % % % % % % %
% Bibliography  %
%% % % % % % % % %
\bibliographystyle{plain}
{\footnotesize
\bibliography{references(1)}

\begin{thebibliography}{10}

\bibitem{RN5802}
S.~Aslan, L.~Hocke, N.~Schwarz, and B.~Frederick.
\newblock Extraction of the cardiac waveform from simultaneous multislice fmri
  data using slice sorted averaging and a deep learning reconstruction filter.
\newblock {\em Neuroimage}, 198:303--316, 2019.

\bibitem{Augl_Thesis_2022}
Thomas Augl.
\newblock Mri pulse wave splitting in the human brain: an inverse problem
  approach, 2022.

\bibitem{RN5618}
A.~Bacyinski, M.~S. Xu, W.~Wang, and J.~N. Hu.
\newblock The paravascular pathway for brain waste clearance: Current
  understanding, significance and controversy.
\newblock {\em Front. Neuroanat.}, 11, 2017.

\bibitem{RN5360}
M.~Bianciardi, N.~Toschi, J.~R. Polimeni, K.~C. Evans, H.~Bhat, B.~Keil, B.~R.
  Rosen, D.~A. Boas, and L.~L. Wald.
\newblock The pulsatility volume index: An indicator of cerebrovascular
  compliance based on fast magnetic resonance imaging of cardiac and
  respiratory pulsatility.
\newblock {\em Philosophical Transactions of the Royal Society a-Mathematical
  Physical and Engineering Sciences}, 374:1--19, 2016.

\bibitem{bjornfot2021}
Cecilia Björnfot, Anders Garpebring, Sara Qvarlander, Jan Malm, Anders Eklund,
  and Anders Wåhlin.
\newblock Assessing cerebral arterial pulse wave velocity using 4d flow mri.
\newblock {\em Journal of Cerebral Blood Flow {\&} Metabolism},
  41(10):2769--2777, 2021.
\newblock PMID: 33853409.

\bibitem{Bleyer_Ramlau_2013}
I.~R. Bleyer and R.~Ramlau.
\newblock A double regularization approach for inverse problems with noisy data
  and inexact operator.
\newblock {\em Inverse Probl.}, 29(2):025004, 2013.

\bibitem{Bleyer_Ramlau_2015}
Ismael~Rodrigo Bleyer and Ronny Ramlau.
\newblock An alternating iterative minimisation algorithm for the
  double-regularised total least square functional.
\newblock {\em Inverse Probl.}, 31(7):075004, may 2015.

\bibitem{Boyd_Parikh_chu_peleatp_Eckstein_2011}
Stephen Boyd, Neal Parikh, Eric Chu, Borja Peleato, and Jonathan Eckstein.
\newblock Distributed optimization and statistical learning via the alternating
  direction method of multipliers.
\newblock {\em Foundations and Trends® in Machine Learning}, 3(1):1--122,
  2011.

\bibitem{Buccini_Donatelli_Ramlau_2018}
Alessandro Buccini, Marco Donatelli, and Ronny Ramlau.
\newblock A semiblind regularization algorithm for inverse problems with
  application to image deblurring.
\newblock {\em SIAM J. Sci. Comput.}, 40(1):A452--A483, 2018.

\bibitem{RN6069}
C.~Chang, J.~P. Cunningham, and G.~H. Glover.
\newblock Influence of heart rate on the bold signal: The cardiac response
  function.
\newblock {\em Neuroimage}, 44(3):857--869, 2009.

\bibitem{RN3466}
M.~S. Dagli, J.~E. Ingeholm, and J.~V. Haxby.
\newblock Localization of cardiac-induced signal change in fmri.
\newblock {\em Neuroimage}, 9(4):407--415, 1999.

\bibitem{Darwich_2015}
Mohamad~Ayham Darwich, François Langevin, and Khaldoun Darwich.
\newblock Local pulse wave velocity estimation in the carotids using dynamic mr
  sequences.
\newblock {\em J. Biomed. Sci. Eng.}, 08(04):227--236, 2015.

\bibitem{Deimling_2013}
K.~Deimling.
\newblock {\em Nonlinear Functional Analysis}.
\newblock Dover books on mathematics. Dover Publications, 2013.

\bibitem{Dieudonne1969}
J.~Dieudonn\'{e}.
\newblock {\em Foundations of modern analysis}, volume Vol. 10-I of {\em Pure
  and Applied Mathematics}.
\newblock Academic Press, New York-London, 1969.
\newblock Enlarged and corrected printing.

\bibitem{Engl_Hanke_Neubauer_1996}
H.~W. {Engl}, M.~{Hanke}, and A.~{Neubauer}.
\newblock {\em {Regularization of inverse problems.}}
\newblock Dordrecht: Kluwer Academic Publishers, 1996.

\bibitem{RN7124}
H.~Fok, A.~Guilcher, S.~Brett, B.~Y. Jiang, Y.~Li, S.~Epstein, J.~Alastruey,
  B.~Clapp, and P.~Chowienczyk.
\newblock Dominance of the forward compression wave in determining pulsatile
  components of blood pressure similarities between inotropic stimulation and
  essential hypertension.
\newblock {\em Hypertension}, 64(5):1116--+, 2014.

\bibitem{RN6068}
S.~S. Franklin.
\newblock Hypertension in older people: Part 1.
\newblock {\em J Clin Hypertens (Greenwich)}, 8(6):444--9, 2006.

\bibitem{RN4508}
A.~Frydrychowicz, C.~J. Francois, and P.~A. Turski.
\newblock Four-dimensional phase contrast magnetic resonance angiography:
  Potential clinical applications.
\newblock {\em Eur. J. Radiol.}, 80(1):24--35, 2011.

\bibitem{RN5453}
Y.~C. Fung.
\newblock {\em Biomechanics: Circulation}.
\newblock Springer, New York, 2nd edition, 1997.

\bibitem{RN5528}
C.~A. Giller and D.~R. Aasli.
\newblock Estimates of pulse-wave velocity and measurement of pulse
  transit-time in the human cerebral-circulation.
\newblock {\em Ultrasound Med. Biol.}, 20(2):101--105, 1994.

\bibitem{RN5339}
P.~B. Gorelick, A.~Scuteri, S.~E. Black, C.~DeCarli, S.~M. Greenberg,
  C.~Iadecola, L.~J. Launer, S.~Laurent, O.~L. Lopez, D.~Nyenhuis, R.~C.
  Petersen, J.~A. Schneider, C.~Tzourio, D.~K. Arnett, D.~A. Bennett, H.~C.
  Chui, R.~T. Higashida, R.~Lindquist, P.~M. Nilsson, G.~C. Roman, F.~W.
  Sellke, and S.~Seshadri.
\newblock Vascular contributions to cognitive impairment and dementia - a
  statement for healthcare professionals from the american heart
  association/american stroke association.
\newblock {\em Upd. Int. Car.}, 42(9):2672--2713, 2011.

\bibitem{Grippof_1999}
Luigi Grippof and Marco Sciandrone.
\newblock Globally convergent block-coordinate techniques for unconstrained
  optimization.
\newblock {\em Optim. Methods Softw.}, 10(4):587--637, January 1999.

\bibitem{RN3540}
Arthur~C. Guyton and John~E. Hall.
\newblock {\em Textbook of Medical Physiology}.
\newblock Saunders/Elsevier, Philadelphia, Pa., 11th edition, 2006.

\bibitem{Hademenos_Massoud_1997}
George~J Hademenos and Tarik~F Massoud.
\newblock {\em The Physics of Cerebrovascular Diseases: Biophysical Mechanisms
  of Development, Diagnosis and Therapy}.
\newblock Springer Science \& Business Media, 1997.

\bibitem{RN4338}
M.~C. Henry~Feugeas, G.~De~Marco, II~Peretti, S.~Godon-Hardy, D.~Fredy, and
  E.~S. Claeys.
\newblock Age-related cerebral white matter changes and pulse-wave
  encephalopathy: Observations with three-dimensional mri.
\newblock {\em Magn. Reson. Imaging}, 23(9):929--37, 2005.

\bibitem{RN3476}
L.~H.~G. Henskens, A.~A. Kroon, R.~J. van Oostenbrugge, E.~H. B.~M.
  Gronenschild, M.~M. J.~J. Fuss-Lejeune, P.~A.~M. Hofman, J.~Lodder, and P.~W.
  de~Leeuw.
\newblock Increased aortic pulse wave velocity is associated with silent
  cerebral small-vessel disease in hypertensive patients.
\newblock {\em Hypertension}, 52(6):1120--1126, 2008.

\bibitem{Hong2016}
Mingyi Hong, Zhi-Quan Luo, and Meisam Razaviyayn.
\newblock Convergence analysis of alternating direction method of multipliers
  for a family of nonconvex problems.
\newblock {\em SIAM J. Optim.}, 26(1):337--364, January 2016.

\bibitem{Hubmer_Neubauer_Ramlau_Voss_2018}
S.~Hubmer, A.~Neubauer, R.~Ramlau, and H.~U. Voss.
\newblock On the parameter estimation problem of magnetic resonance advection
  imaging.
\newblock {\em Lect. Notes. Math.}, 12(1):175--204, 2018.

\bibitem{Hubmer_Neubauer_Ramlau_Voss_2020}
S.~Hubmer, A.~Neubauer, R.~Ramlau, and H.~U. Voss.
\newblock A conjugate-gradient approach to the parameter estimation problem of
  magnetic resonance advection imaging.
\newblock {\em Inverse Probl. Sci. En.}, 28(8):1154--1165, 2020.

\bibitem{RN7125}
A.~D. Hughes, J.~E. Davies, and K.~H. Parker.
\newblock The importance of wave reflection: a comparison of wave intensity
  analysis and separation of pressure into forward and backward components.
\newblock {\em 2013 35th Annual International Conference of the Ieee
  Engineering in Medicine and Biology Society (EMBC)}, pages 229--232, 2013.

\bibitem{RN5794}
A.~D. Hughes and K.~H. Parker.
\newblock Forward and backward waves in the arterial system: impedance or wave
  intensity analysis?
\newblock {\em Med. Biol. Eng. Comput.}, 47(2):207--210, 2009.

\bibitem{RN6080}
T.~M. Hughes, L.~H. Kuller, E.~J.~M. Barinas-Mitchell, R.~H. Mackey, E.~M.
  McDade, W.~E. Klunk, H.~J. Aizenstein, A.~D. Cohen, B.~E. Snitz, C.~A.
  Mathis, S.~T. DeKosky, and O.~L. Lopez.
\newblock Pulse wave velocity is associated with beta-amyloid deposition in the
  brains of very elderly adults.
\newblock {\em Neurology}, 81(19):1711--1718, 2013.

\bibitem{RN5338}
C.~Iadecola.
\newblock The pathobiology of vascular dementia.
\newblock {\em Neuron}, 80(4):844--866, 2013.

\bibitem{RN5337}
C.~Iadecola.
\newblock Vascular and metabolic factors in alzheimer's disease and related
  dementias: Introduction.
\newblock {\em Cell. Mol. Neurobiol.}, 36(2):151--154, 2016.

\bibitem{RN5341}
C.~Iadecola and P.~B. Gorelick.
\newblock Converging pathogenic mechanisms in vascular and neurodegenerative
  dementia.
\newblock {\em Upd. Int. Car.}, 34(2):335--337, 2003.

\bibitem{RN5869}
J.~J. Iliff, M.~H. Wang, Y.~H. Liao, B.~A. Plogg, W.~G. Peng, G.~A. Gundersen,
  H.~Benveniste, G.~E. Vates, R.~Deane, S.~A. Goldman, E.~A. Nagelhus, and
  M.~Nedergaard.
\newblock A paravascular pathway facilitates csf flow through the brain
  parenchyma and the clearance of interstitial solutes, including amyloid beta.
\newblock {\em Sci. Transl. Med.}, 4(147), 2012.

\bibitem{RN5815}
J.~J. Iliff, M.~H. Wang, D.~M. Zeppenfeld, A.~Venkataraman, B.~A. Plog, Y.~H.
  Liao, R.~Deane, and M.~Nedergaard.
\newblock Cerebral arterial pulsation drives paravascular csf-interstitial
  fluid exchange in the murine brain.
\newblock {\em J. Neurosci.}, 33(46):18190--18199, 2013.

\bibitem{RN5619}
N.~A. Jessen, A.~S.~F. Munk, I.~Lundgaard, and M.~Nedergaard.
\newblock The glymphatic system: A beginner's guide.
\newblock {\em Neurochem. Res.}, 40(12):2583--2599, 2015.

\bibitem{RN5870}
D.~W. Kaandorp, K.~Kopinga, and P.~F.~F. Wijn.
\newblock Separation of haemodynamic flow waves measured by mr into forward and
  backward propagating components.
\newblock {\em Physiol. Meas.}, 20(2):187--199, 1999.

\bibitem{Kaltenbacher_Neubauer_Scherzer_2008}
B.~{Kaltenbacher}, A.~{Neubauer}, and O.~{Scherzer}.
\newblock {\em {Iterative regularization methods for nonlinear ill-posed
  problems.}}
\newblock Berlin: de Gruyter, 2008.

\bibitem{Korürek_Yildiz_Yüksel_2010}
Mehmet Korürek, Mustafa Yıldız, and Ayhan Yüksel.
\newblock Simulation of normal cardiovascular system and severe aortic stenosis
  using equivalent electronic model.
\newblock {\em Anadolu kardiyoloji dergisi : AKD = the Anatolian journal of
  cardiology}, 10:471--8, 11 2010.

\bibitem{RN5790}
S.~Laurent, J.~Cockcroft, L.~Van~Bortel, P.~Boutouyrie, C.~Giannattasio,
  D.~Hayoz, B.~Pannier, C.~Vlachopoulos, I.~Wilkinson, H.~Struijker-Boudier,
  and European~Network Non-invasive.
\newblock Expert consensus document on arterial stiffness: methodological
  issues and clinical applications.
\newblock {\em Eur. Heart J.}, 27(21):2588--2605, 2006.

\bibitem{RN3496}
John K.~J. Li.
\newblock {\em Dynamics of the Vascular System}.
\newblock Series on Bioengineering and Biomedical Engineering. World
  Scientific, River Edge, N.J., 2004.

\bibitem{Lim_Chan_Dokos_Ng_Latif_Vandenberghe_Karunanithi_Lovell_2013}
Einly Lim, Gregory S.~H. Chan, Socrates Dokos, Siew~C. Ng, Lydia~A. Latif,
  Stijn Vandenberghe, Mohan Karunanithi, and Nigel~H. Lovell.
\newblock A cardiovascular mathematical model of graded head-up tilt.
\newblock {\em PLOS ONE}, 8(10):1--12, 10 2013.

\bibitem{Louis_1989}
A.~K. Louis.
\newblock {\em {I}nverse und schlecht gestellte {P}robleme}.
\newblock Teubner Studienb{\"u}cher Mathematik. Vieweg+Teubner Verlag, 1989.

\bibitem{RN3509}
C.~K. Macgowan, S.~J. Stoops, Y.~Q. Zhou, L.~S. Cahill, and J.~G. Sled.
\newblock Evaluation of cerebrovascular impedance and wave reflection in mouse
  by ultrasound.
\newblock {\em J. Cerebr. Blood F. Met.}, 35(3):521--6, 2015.

\bibitem{RN7134}
R.~Manoj, V.~Raj~Kiran, P.~M. Nabeel, M.~Sivaprakasam, and J.~Joseph.
\newblock Separation of forward-backward waves in the arterial system using
  multi-gaussian approach from single pulse waveform.
\newblock {\em Annu Int Conf IEEE Eng Med Biol Soc}, 2021:5547--5550, 2021.

\bibitem{RN6926}
J.~P. Merillon, Y.~Lebras, J.~Chastre, J.~F. Lerallut, G.~Motte, G.~Fontenier,
  Y.~Huet, M.~Y. Jaffrin, and R.~Gourgon.
\newblock Forward and backward waves in the arterial system, their relationship
  to pressure waves form.
\newblock {\em Eur. Heart J.}, 4:13--20, 1983.

\bibitem{Murray_1964}
Keith~D. Murray.
\newblock Dimensions of the circle of willis and dynamic studies using
  electrical analogy.
\newblock {\em J. Neurosurg.}, 21(1):26 -- 34, 1964.

\bibitem{RN5632}
J.~P. Mynard and J.~J. Smolich.
\newblock Wave potential: A unified model of arterial waves, reservoir
  phenomena and their interaction.
\newblock {\em Artery Research}, 18:55--63, 2017.

\bibitem{Neubauer_2017}
A.~Neubauer.
\newblock On {N}esterov acceleration for {L}andweber iteration of linear
  ill{-}posed problems.
\newblock {\em J. Inv. Ill-Posed Problems}, 25(3):381--390, 2017.

\bibitem{RN5481}
M.~F. Orourke.
\newblock Vascular impedance in studies of arterial and cardiac-function.
\newblock {\em Physiol. Rev.}, 62(2):570--623, 1982.

\bibitem{RN4339}
M.~F. O'Rourke and J.~Hashimoto.
\newblock Mechanical factors in arterial aging: A clinical perspective.
\newblock {\em J. Am. Coll. Cardiol.}, 50(1):1--13, 2007.

\bibitem{Ortega_Rheinboldt_2000}
J.~M. Ortega and W.~C. Rheinboldt.
\newblock {\em Iterative Solution of Nonlinear Equations in Several Variables}.
\newblock Society for Industrial and Applied Mathematics, 2000.

\bibitem{RN5782}
K.~H. Parker.
\newblock An introduction to wave intensity analysis.
\newblock {\em Med. Biol. Eng. Comput.}, 47(2):175--188, 2009.

\bibitem{RN7138}
F.~Pythoud, N.~Stergiopulos, and J.~J. Meister.
\newblock Forward and backward waves in the arterial system: nonlinear
  separation using riemann invariants.
\newblock {\em Technol. Health Care}, 3(3):201--7, 1995.

\bibitem{Rabben_Stergiopulos_Hellevik_Smisetz_slordahl_2004}
Stein~Inge Rabben, Nikos Stergiopulos, Leif~Rune Hellevik, Otto~A. Smiseth,
  Stig Slørdahl, Stig Urheim, and Bjørn Angelsen.
\newblock An ultrasound-based method for determining pulse wave velocity in
  superficial arteries.
\newblock {\em J. Biomech.}, 37(10):1615--1622, 2004.

\bibitem{RN5460}
E.~B. Ringelstein, B.~Kahlscheuer, E.~Niggemeyer, and S.~M. Otis.
\newblock Transcranial doppler sonography - anatomical landmarks and normal
  velocity values.
\newblock {\em Ultrasound Med. Biol.}, 16(8):745--761, 1990.

\bibitem{Rivera2020}
Leonardo~A Rivera-Rivera, Karly~A Cody, Laura Eisenmenger, Paul Cary, Howard~A
  Rowley, Cynthia~M Carlsson, Sterling~C Johnson, and Kevin~M Johnson.
\newblock Assessment of vascular stiffness in the internal carotid artery
  proximal to the carotid canal in alzheimer's disease using pulse wave
  velocity from low rank reconstructed 4d flow {MRI}.
\newblock {\em Journal of Cerebral Blood Flow {\&} Metabolism}, 41(2):298--311,
  March 2020.

\bibitem{Roll_Colchester_Summers_Griffin_1994}
SA~R{\"o}ll, Alan~CF Colchester, Paul~E Summers, and Lewis~D Griffin.
\newblock Intensity-based object extraction from 3d medical images including a
  correction of partial volume errors.
\newblock In {\em BMVC}, pages 1--10, 1994.

\bibitem{Rupnik_Runovc_Sket_Kordas_2002}
M.~Rupnik, F.~Runovc, D.~Sket, and M.~Kordaš.
\newblock Cardiovascular physiology: simulation of steady state and transient
  phenomena by using the equivalent electronic circuit.
\newblock {\em Comput. Meth. Prog. Bio.}, 67(1):1--12, 2002.

\bibitem{Scherzer_1995}
O.~Scherzer.
\newblock {Convergence Criteria of Iterative Methods Based on Landweber
  Iteration for Solving Nonlinear Problems}.
\newblock {\em J. Math. Anal. Appl.}, 194(3):911--933, 1995.

\bibitem{RN6220}
W.~Sorteberg, A.~Langmoen, K.~F. Lindegaard, and H.~Nornes.
\newblock Side-to-side differences and day-to-day variations of transcranial
  doppler parameters in normal subjects.
\newblock {\em J. Ultras. Med.}, 9(7):403--409, 1990.

\bibitem{Tang_Lee_Chuang_Huang_2020}
Chieh-Ju Tang, Po-Yang Lee, Yi-Hsiang Chuang, and Chih-Chung Huang.
\newblock Measurement of local pulse wave velocity for carotid artery by using
  an ultrasound-based method.
\newblock {\em Ultrasonics}, 102(106064):106064, March 2020.

\bibitem{RN3507}
T.~Tarumi, M.~Ayaz~Khan, J.~Liu, B.~Y. Tseng, R.~Parker, J.~Riley, C.~Tinajero,
  and R.~Zhang.
\newblock Cerebral hemodynamics in normal aging: central artery stiffness, wave
  reflection, and pressure pulsatility.
\newblock {\em J. Cerebr. Blood F. Met.}, 34(6):971--8, 2014.

\bibitem{RN6070}
T.~Tarumi, F.~Shah, H.~Tanaka, and A.~P. Haley.
\newblock Association between central elastic artery stiffness and cerebral
  perfusion in deep subcortical gray and white matter.
\newblock {\em Am. J. Hypertens.}, 24(10):1108--1113, 2011.

\bibitem{RN3481}
Y.~J. Tong, L.~M. Hocke, and B.~D. Frederick.
\newblock Short repetition time multiband echo-planar imaging with simultaneous
  pulse recording allows dynamic imaging of the cardiac pulsation signal.
\newblock {\em Magnet. Reson. Med.}, 72(5):1268--1276, 2014.

\bibitem{RN5795}
H.~U. Voss.
\newblock Intracranial pulse waves derived from dynamic mri.
\newblock {\em Conference abstract Allen Workshop BioImage Informatics}, 2019.

\bibitem{RN5614}
H.~U. Voss, J.~P. Dyke, D.~J. Ballon, and A.~Gupta.
\newblock Mri pulse wave profiles of cerebral arteries.
\newblock {\em Proceedings of the International Society for Magnetic Resonance
  in Medicine}, page 3247, 2019.

\bibitem{RN5298}
H.~U. Voss, J.~P. Dyke, K.~Tabelow, N.~D. Schiff, and D.~J. Ballon.
\newblock Magnetic resonance advection imaging of cerebrovascular pulse
  dynamics.
\newblock {\em J. Cerebr. Blood F. Met.}, 37(4):1223--1235, 2017.

\bibitem{RN5334}
H.~U. Voss, J.~Stadler, J.~P. Dyke, and D.~J. Ballon.
\newblock A transfer function model for local signal propagation in
  spatiotemporal mr data.
\newblock {\em Proceedings of the International Society for Magnetic Resonance
  in Medicine}, page 155, 2018.

\bibitem{Voss_2018}
Henning~U. Voss.
\newblock Hypersampling of pseudo-periodic signals by analytic phase
  projection.
\newblock {\em Comput. Biol. Med.}, 98:159--167, 2018.

\bibitem{RN5342}
H.U. Voss, J.P. Dyke, and D.J. Ballon.
\newblock Mapping cerebrovascular dynamics with mr advection imaging (mrai):
  Estimation bias and image reconstruction challenges.
\newblock {\em Conference abstract ICERM - Computational and Analytical Aspects
  of Image Reconstruction}, 2015.

\bibitem{RN3467}
H.U. Voss, J.P. Dyke, D.J. Ballon, N.D. Schiff, and K.~Tabelow.
\newblock Magnetic resonance advection imaging (mrai) depicts vascular anatomy.
\newblock {\em Conference abstract OHBM - Human Brain Mapping}, page 2436,
  2015.

\bibitem{RN3468}
H.U. Voss, J.P. Dyke, K.~Tabelow, D.J. Ballon, and N.D. Schiff.
\newblock Magnetic resonance advection imaging (mrai): Sensitivity to pulsatile
  flow.
\newblock {\em Conference abstract BRAIN/PET}, 2015:141, 2015.

\bibitem{RN3602}
H.U. Voss, J.P. Dyke, K.~Tabelow, N.D. Schiff, and D.J. Ballon.
\newblock Mapping cerebrovascular dynamics with magnetic resonance advection
  imaging (mrai): Modeling challenges and estimation bias.
\newblock {\em Conference abstract Meeting of the Society for Neuroscience},
  2015.

\bibitem{Villiemoz_Stergiopulos_Meuli_2002}
Serge Vulliémoz, Nikos Stergiopulos, and Reto Meuli.
\newblock Estimation of local aortic elastic properties with mri.
\newblock {\em Magnet. Reson. Med.}, 47(4):649--654, 2002.

\bibitem{RN3663}
M.~E. Wagshul, P.~K. Eide, and J.~R. Madsen.
\newblock The pulsating brain: A review of experimental and clinical studies of
  intracranial pulsatility.
\newblock {\em Fluids Barriers CNS}, 8(1):5, 2011.

\bibitem{RN3545}
E.~A.~H. Warnert, K.~Murphy, J.~E. Hall, and R.~G. Wise.
\newblock Noninvasive assessment of arterial compliance of human cerebral
  arteries with short inversion time arterial spin labeling.
\newblock {\em J. Cerebr. Blood F. Met.}, 35(3):461--468, 2015.

\bibitem{RN6919}
B.~E. Westerhof, I.~Guelen, N.~Westerhof, J.~M. Karemaker, and A.~Avolio.
\newblock Quantification of wave reflection in the human aorta from pressure
  alone - a proof of principle.
\newblock {\em Hypertension}, 48(4):595--601, 2006.

\bibitem{RN741}
R.~L. Wolf and J.~A. Detre.
\newblock Clinical neuroimaging using arterial spin-labeled perfusion magnetic
  resonance imaging.
\newblock {\em Neurotherapeutics}, 4(3):346--359, 2007.

\bibitem{RN742}
E.~C. Wong, R.~B. Buxton, and L.~R. Frank.
\newblock Quantitative perfusion imaging using arterial spin labeling.
\newblock {\em Neuroimag. Clin. N. Am.}, 9(2):333--342, 1999.

\bibitem{Wright2015}
Stephen~J. Wright.
\newblock Coordinate descent algorithms.
\newblock {\em Math. Program.}, 151(1):3--34, March 2015.

\bibitem{Wymer_2020}
David~T. Wymer, Kunal~P. Patel, William~F. Burke, and Vinay~K. Bhatia.
\newblock Phase-contrast mri: Physics, techniques, and clinical applications.
\newblock {\em RadioGraphics}, 40(1):122--140, January 2020.

\bibitem{RN4511}
Lirong Yan, Collin~Y. Liu, Robert~X. Smith, Mayank Jog, Michael Langham, Kate
  Krasileva, Yufen Chen, John~M. Ringman, and Danny J.~J. Wang.
\newblock Assessing intracranial vascular compliance using dynamic arterial
  spin labeling.
\newblock {\em NeuroImage}, 124, Part A:433--441, 2016.

\bibitem{RN3480}
M.~Zamir.
\newblock {\em The Physics of Pulsatile Flow}.
\newblock Biological Physics Series. AIP Press; Springer, New York, 2000.

\bibitem{Zou_Park_Kelly_Egnor_Wagshul_Madsen_2008}
Rui Zou, Eun-Hyoung Park, Erin~McCormack Kelly, Michael Egnor, Mark~E. Wagshul,
  and Joseph~R. Madsen.
\newblock Intracranial pressure waves: characterization of a pulsation absorber
  with notch filter properties using systems analysis: Laboratory
  investigation.
\newblock {\em Journal of Neurosurgery: Pediatrics PED}, 2(1):83 -- 94, 2008.

\end{thebibliography}
}

\end{document}